\newcommand{\CC}{{\mathbb C}}
\newcommand{\RR}{{\mathbb R}}
\newcommand{\HH}{{\mathbb H}}
\newcommand{\BB}{{\mathbb B}}
\newcommand{\ov}{\overline}
\newcommand{\p}{\partial}
\newcommand{\w}{\widetilde}
\newcommand{\al}{\alpha}
\newcommand{\be}{\beta}
\newtheorem{thm}{Theorem}[section]
\newtheorem{lem}[thm]{Lemma}
\numberwithin{equation}{section}
\begin{document}
\title{\bf The CR immersion into a sphere with the degenerate CR Gauss map }
\author{Wanke Yin\footnote{partially supported by NSFC-11571260 and NSFC-11722110 }, Yuan Yuan\footnote{partially supported by Simons Foundation grant (\#429722 Yuan Yuan) and CUSE Grant Program at Syracuse University} and Yuan Zhang\footnote{partially supported by NSF DMS-1501024}}
\date{}
\maketitle

\begin{abstract}
It is a classical problem in algebraic geometry  to characterize the algebraic subvariety by using the Gauss map. In this note, we try to develop the analogue theory in CR geometry. In particular, under some assumptions, we show that a CR map between spheres is totally geodesic if and only if the CR Gauss map of the image is degenerate.

\end{abstract}

\section{Introduction}

Denote by $\mathbb{C}\mathbb{P}^n$ the complex projective space, and denote by $G(k, n)$ the Grassmannian of $\mathbb{C}\mathbb{P}^k$'s in $\mathbb{C}\mathbb{P}^n$. Let $V$ be  a complex analytic subvariety in $\mathbb{C}\mathbb{P}^n$ and $V_{sm}$ denotes its smooth points. Then the Gauss map of $V\subset \mathbb{C}\mathbb{P}^n$ is defined by $\gamma: V_{sm} \rightarrow G(k, n)$, which sends each smooth point $x\in V_{sm}$ to the projective tangent space $T_x(V)$. $\gamma$ is said to be {\it degenerate} if its generic fibers have positive dimensional components. Otherwise, $\gamma$ is called non-degenerate.
In Cartan's moving frame theory, the Gauss map has wide geometric applications in Euclidean and projective geometry. For example, one can obtain rigidity results from the degeneracy of the Gauss maps. In fact, the study of subvarieties of complex projective spaces, tori and hyperbolic space forms with degenerate Gauss maps are classical works due to Griffiths-Harris \cite{GH}, Ran \cite{R} and Hwang \cite{Hw}. The interested reader are referred to \cite{IL} for more recent progress on subvarieties of complex projective spaces with degenerate Gauss maps.

The Gauss map also has the close relation to the second fundamental form as the latter may be interpreted as the derivative of the Gauss map.
In CR geometry, the CR second fundamental form appeared in the fundamental work of
Chern-Moser \cite{CM} and Webster \cite{W1}, as well as the work of
Ebenfelt-Huang-Zaitsev \cite{EHZ} in the study of the classification and rigidity of CR submanifolds.
One of the central problems in CR geometry is the classification of smooth CR maps between spheres. This problem has been extensively studied and many important progresses have been made by many authors in recent years (cf. \cite{W2,Fa86,Hu99,Ha05,HJX06,DL,HJY14,Eb} and references therein).
If the CR second fundamental form vanishes, Ji and the second author showed that
the smooth immersed
strongly pseudoconvex real hypersurface in a sphere $\partial\mathbb{B}^n$  must be linear \cite{JY}. Cheng-Ji
later relaxed the condition to the difference of the second fundamental form and CR second fundamental form vanishing and proved the linearity under some codimension restriction \cite{CJ}.
However, in the CR geometry, 
the Gauss map is not fully understood.
One can define the Gauss map for any $C^1$ immersed  CR submanifold in $\p \mathbb{B}^N$ as the sphere $\p \mathbb{B}^N$ may be embedded into $\mathbb{C}\mathbb{P}^N$ (The detailed formulation of the CR Gauss map is given in the last paragraph of the next section). The following interesting question is formulated in  \cite{CJL}: Let $V\subset \p \mathbb{B}^N$ be an immersed spherical
CR submanifold.  Is the CR Gauss map $\gamma$ degenerate
if and only if $V$ is the image of a linear embedding $F : \p \mathbb{B}^n \rightarrow \p \mathbb{B}^N$?
In \cite{CJL}, Cheng-Ji-Liu answered the question in the following two cases: (1) dim$_{\mathbb{R}}V=3,\ N=3$; (2) $V = F(\p \mathbb{B}^2)$ and
$F : \p \mathbb{B}^2 \rightarrow \p \mathbb{B}^N$ is the restriction of a rational holomorphic map with deg$(F) = 2$.


We next state our main results, in which the terminology
will be defined in the next section.

\begin{thm}\label{main}
Let $F:\p \mathbb{B}^n \rightarrow \p \mathbb{B}^N$ be a $C^3$-smooth CR
  map with geometric rank $\kappa_0\le n-2$. Assume that one of the following conditions hold:

 (1)   the  degeneracy rank $\leq$ 2,

 (2) the third degeneracy rank $\geq$ 3, and the third degeneracy dimension $$d_{3}\neq \frac{\kappa_0}{6}\Big(3(\kappa_0+3)n-(\kappa_0+1)(2\kappa_0+1)\Big).$$
  Then the CR Gauss map of $F(\p \mathbb{B}^n)$ in $\p \mathbb{B}^N$ is degenerate if and only if $F$ is a totally
 geodesic embedding.
\end{thm}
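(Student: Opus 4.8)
The plan is to reduce everything to Huang's normal form for a CR map of prescribed geometric rank and then compute the CR Gauss map explicitly from that normal form. First I would recall that, since $F$ is $C^3$-smooth with geometric rank $\kappa_0\le n-2$, after composing with automorphisms of the source and target spheres one may put $F$ into the Huang--Ji--Xu normalization in Heisenberg coordinates $(z,w)$: the linear part is the standard embedding $z\mapsto(z,0,w)$, the second-order part is governed by the CR second fundamental form whose rank is exactly $\kappa_0$ (so $\kappa_0$ of the dilation parameters $\mu_j$ are positive and the rest vanish), and the remaining freedom sits in the higher-order terms. In particular $\kappa_0=0$ is equivalent to $F$ being the totally geodesic (linear) embedding, so the statement to be proved becomes: the Gauss map $\gamma$ of $F(\p\BB^n)$ is degenerate if and only if $\kappa_0=0$.

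The easy direction ($\Leftarrow$) should be nearly immediate: if $F$ is totally geodesic then $V=F(\p\BB^n)$ is a standard linear subsphere sitting in a projective-linear $\CC\PP^n\subset\CC\PP^N$, and its projective tangent spaces vary in a lower-dimensional family, so the generic fiber of $\gamma$ has positive dimension and $\gamma$ is degenerate. This is the CR analogue of the classical fact that the Gauss map of a linear subspace is constant.

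For the hard direction I would argue the contrapositive: assuming $\kappa_0\ge 1$, I would show under either hypothesis (1) or (2) that $\gamma$ is non-degenerate. The key is to write the differential $d\gamma$ along the complex-tangential (CR) directions and read off its generic rank. To leading order $d\gamma$ is identified with the second fundamental form, whose rank is controlled by $\kappa_0$; when this first-order information already pins the rank down---which is exactly what the degeneracy rank $\le 2$ hypothesis (1) guarantees---a direct computation with the $\mu_j$ shows the generic fiber of $\gamma$ is zero-dimensional, so $\gamma$ is non-degenerate. When the second-order data is inconclusive, I would pass to the third-order terms: the third fundamental form, encoded by the third degeneracy rank and the third degeneracy dimension $d_3$, supplies the further derivatives of $\gamma$. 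Here the dimension count enters decisively, since the span of the generic third-order fundamental-form tensors has expected dimension exactly $\frac{\kappa_0}{6}\big(3(\kappa_0+3)n-(\kappa_0+1)(2\kappa_0+1)\big)$ (note the second summand is $\sum_{i=1}^{\kappa_0} i^2$); so when $d_3$ differs from this value and the third degeneracy rank is $\ge 3$, the third-order derivatives of $\gamma$ contribute directions transverse to the fiber, forcing $\mathrm{rank}\,d\gamma$ to be maximal and hence $\gamma$ non-degenerate.

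The main obstacle, as I see it, is the precise bookkeeping in this last rank computation: one must express $d\gamma$ through second and third order in the normal-form coefficients, isolate the contribution of each order to the fiber dimension of $\gamma$, and verify that the critical value $d_3=\frac{\kappa_0}{6}\big(3(\kappa_0+3)n-(\kappa_0+1)(2\kappa_0+1)\big)$ is exactly the locus where the third-order analysis degenerates. Establishing that the excluded value is sharp, and that away from it the rank is genuinely maximal, is the technical heart of the argument and is where the full strength of Huang's normalization and the fine structure of the higher CR fundamental forms must be used.
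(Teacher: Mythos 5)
Your overall skeleton does match the paper's: reformulate the theorem as ``$\gamma$ is degenerate iff $\kappa_0=0$'' using Huang's normalization and linearity theorem, dispose of the easy direction (for the linear map $\phi\equiv 0$, so the Gauss map is constant and every fiber is all of $\p\BB^n$), and prove the contrapositive of the hard direction. However, the mechanism you propose for the hard direction has a genuine gap, and it sits exactly where the paper does its real work. The second fundamental form does not, under any hypothesis, ``pin down'' the rank of $d\gamma$. In the normal form (\ref{eqn:hao}) the second fundamental form is carried by the quadratic terms $\mu_{kl}z_kz_l$ of $\phi$, and differentiating these only controls the components of $d\gamma$ along the $n-1$ complex-tangential directions (the first $n-1$ columns of the paper's matrix $\Upsilon(p)$ in Section 5). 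The derivative of $\gamma$ along the remaining characteristic direction $T$ is governed by a different piece of the jet: the coefficients $e_{j,kl}$ of the terms $z_jw$ in $\phi_p^{****}$. If the vector $\Phi_p^{****(1,1)}$ of these coefficients vanished identically, then $\Upsilon(p)$ would have rank at most $n-1$ and the fiber of $\gamma$ could be positive-dimensional in the missing direction, no matter what $\kappa_0$ and the $\mu_j$ are. So hypothesis (1) cannot enter the way you describe: ``degeneracy rank $\le 2$ guarantees the first-order information pins the rank down'' is not a valid inference, because the degeneracy rank concerns the spans $E_k(p)$ of derivatives of $\hat\rho_{\ov Z}\circ F$, not the differential of the Gauss map.

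What is actually needed is the paper's Theorem \ref{phi110}: $\Phi_p^{****(1,1)}\not\equiv 0$ for generic $p$, which is precisely what makes the implicit-function-theorem argument (equivalently, your rank count) close. Hypotheses (1) and (2) enter only through a contradiction argument driven by the Chern--Moser identities of \cite{HJY14}: if $\Phi^{(1,1)}\equiv 0$, these identities force $f^{(2,1)}=\phi^{(1,2)}=0$, then determine all the relevant cubic coefficients, and finally yield $|\Phi_1^{(3,0)}(z)|^2=\big(\sum_{j\le\kappa_0}\mu_j|z_j|^2\big)^2|z|^2\neq 0$; hence the third degeneracy rank is $\ge 3$ (contradicting (1)) and the third degeneracy dimension equals exactly $\frac{\kappa_0}{6}\big(3(\kappa_0+3)n-(\kappa_0+1)(2\kappa_0+1)\big)$ (contradicting (2)). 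Note in particular that this critical value is not, as you suggest, the ``expected dimension of the span of generic third-order fundamental-form tensors''; it is the value \emph{forced} in the degenerate case $\Phi^{(1,1)}\equiv 0$, which is exactly why excluding it is the right hypothesis in (2). Your proposal, as written, never isolates $\Phi^{(1,1)}$ as the decisive quantity, and therefore never reaches the step at which (1) or (2) can actually be used; filling this in requires the Chern--Moser computation that constitutes Sections 3--4 of the paper, not bookkeeping on top of the second fundamental form.
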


As an immediate consequence, we obtain

\begin{thm}\label{main1}
Let $F:\p \mathbb{B}^n \rightarrow \p \mathbb{B}^N$ be a $C^3$-smooth CR
  map with geometric rank $\kappa_0\le n-2$.
  Suppose that $N<\frac{1}{2}(\kappa_0+1)(\kappa_0+2)n-\frac{1}{6}\kappa_0(\kappa_0+1)(2\kappa_0+1)$.
 Then the CR Gauss map of $F(\p \mathbb{B}^n)$ in $\p \mathbb{B}^N$ is degenerate if and only if $F$ is a totally
 geodesic embedding.
\end{thm}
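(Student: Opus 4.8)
The plan is to derive Theorem~\ref{main1} directly from Theorem~\ref{main}, so the entire task reduces to verifying that the single dimension hypothesis
$$
N<\tfrac{1}{2}(\kappa_0+1)(\kappa_0+2)n-\tfrac{1}{6}\kappa_0(\kappa_0+1)(2\kappa_0+1)
$$
forces one of the two alternative conditions (1) or (2) of the main theorem to hold. First I would recall, from the definitions in the next section, the meaning of the degeneracy ranks and the third degeneracy dimension $d_3$, together with the inequalities relating them to the codimension $N-n$ of the image in $\p\mathbb{B}^N$. The guiding principle is that each successive degeneracy rank of the CR Gauss map measures how many independent components of the (iterated) second fundamental form can survive, and that a small target dimension $N$ caps the total room available; so a small $N$ should pin the geometry down.

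The key step is a counting argument. I would show that the quantity
$$
\tfrac{1}{2}(\kappa_0+1)(\kappa_0+2)n-\tfrac{1}{6}\kappa_0(\kappa_0+1)(2\kappa_0+1)
$$
is exactly the dimension threshold below which the third degeneracy rank cannot reach $3$ while simultaneously realizing the exceptional value $d_3=\tfrac{\kappa_0}{6}\bigl(3(\kappa_0+3)n-(\kappa_0+1)(2\kappa_0+1)\bigr)$ of condition (2). Concretely, I would argue by contrapositive: suppose neither (1) nor (2) holds, i.e. the degeneracy rank is $\geq 3$ and the third degeneracy rank is $\geq 3$ with $d_3$ equal to the forbidden value. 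Using the monotonicity of the degeneracy filtration together with the fact that the jets of $F$ up to the relevant order embed into the ambient space $\mathbb{C}^N$, I would produce a lower bound for $N$ in terms of $\kappa_0$ and $n$ that matches the displayed threshold, contradicting the hypothesis $N<\tfrac{1}{2}(\kappa_0+1)(\kappa_0+2)n-\tfrac{1}{6}\kappa_0(\kappa_0+1)(2\kappa_0+1)$. Hence under the stated bound on $N$, at least one of (1), (2) must hold, and Theorem~\ref{main} applies verbatim to yield the desired equivalence.

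The main obstacle I anticipate is bookkeeping rather than conceptual: one must track precisely how the degeneracy dimensions $d_1,d_2,d_3$ accumulate into a lower bound for $N$, and in particular verify that the \emph{strict} inequality in the hypothesis correctly excludes only the boundary case $d_3=\tfrac{\kappa_0}{6}\bigl(3(\kappa_0+3)n-(\kappa_0+1)(2\kappa_0+1)\bigr)$ rather than a whole range. The binomial-type sums $\sum_{j} (n-j)$ and $\sum_j j^2$ that appear are what generate the $\tfrac{1}{2}(\kappa_0+1)(\kappa_0+2)n$ and $\tfrac{1}{6}\kappa_0(\kappa_0+1)(2\kappa_0+1)$ terms respectively, so the arithmetic must be arranged so these cancel cleanly against the threshold. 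Once the counting is set up correctly, the corollary follows with no further geometric input, since all the substantive analysis is already contained in Theorem~\ref{main}.
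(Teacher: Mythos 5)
Your proposal is correct and is exactly the paper's (implicit) derivation: since $E_3(p)\subset\CC^N$ and $\dim_\CC E_1(p)=n$ for a CR immersion, one has $d_3\le N-n$, while the hypothesis $N<\tfrac{1}{2}(\kappa_0+1)(\kappa_0+2)n-\tfrac{1}{6}\kappa_0(\kappa_0+1)(2\kappa_0+1)$ is precisely the statement $N-n<\tfrac{\kappa_0}{6}\bigl(3(\kappa_0+3)n-(\kappa_0+1)(2\kappa_0+1)\bigr)$, so the exceptional value of $d_3$ in condition (2) of Theorem \ref{main} can never be attained and Theorem \ref{main} applies. The paper offers no written proof beyond calling Theorem \ref{main1} an immediate consequence, and your counting argument (with the arithmetic identity that the threshold equals the forbidden $d_3$ plus $n$) is the intended one.
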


\section{Notations and Preliminaries}
In this section, we start by recalling some notations and properties
associated to the proper holomorphic maps between balls, which are
established in [Hu99,03] and \cite{HJX06}. Next, we define the CR Gauss
maps of these maps and reduce the condition on the CR Gauss maps to a
proper form, following the lines of \cite{Hw} and \cite{CJL}.

 Let $\p\mathbb{B}^n$ be the sphere in $\mathbb{C}^n$ and write $\p\HH_n:=\{(z,w)\in {\CC}^{n-1}\times {\CC}:
  \ \hbox{Im}(w)=|z|^2\}$ for the Heisenberg group.
By the Cayley transformation
\begin{equation}
\rho_n: {\HH}_n\to {\BB}^n, \ \ \rho_n(z,w)=\bigg(\frac{2z}{1-iw},\
\frac{1+iw}{1-iw}\bigg) \label{eqn:rho}
\end{equation}
  we can identify a CR map $F$ from $\p{\BB}^n$ into $\p{\BB}^N$
  %
  with
$\p\rho^{-1}_N\circ F\circ \p\rho_n$, which is a CR map from
$\p{\HH}_n$ into $\p{\HH}_N$.

Parameterize $\partial \HH_n$ by $(z,\overline{z},u)$ through the
map $(z,\overline{z},u)\to (z,u+i|z|^2)$. For a non-negative integer $m$ and a function $h(z,\overline{z},u)$
defined over a small ball  $U$ of $0$ in $\partial \HH_n$, we say   $h(z,\overline{z},u)=o_{wt}(m)$ if
$\frac{h(tz,t\overline{z},t^2u)}{|t|^{m}}\to 0$ uniformly for
$(z,u)$ on any
  compact subset of $U$ as $t(\in {\RR})\to 0$.
For a holomorphic function (or
  map) $H(z,w)$, we write
  $$H(z,w)=\sum_{k,l=0}^{\infty}H^{(k,l)}(z)w^l=\sum_{i_1\cdots,i_{n-1},l=0}^{\infty}H^{(i_1I_1+\cdots+i_{n-1}I_{n-1}+lI_n)}
  z_1^{i_1}\cdots z_{n-1}^{i_{n-1}}w^l.$$
Here  $H^{(k,l)}(z)$ a polynomial of degree $k$ in $z$.
\bigskip

Let $F=(f,\phi,g)=(\widetilde{f}, g)= (f_1,\cdots,f_{n-1},
\phi_1,\cdots, \phi_{N-n},g)$ be a non-constant $C^2$-smooth CR map
from $\partial{\HH}_n$ into $\partial{\HH}_N$ with $F(0)=0$. For
each $p=(z_0, w_0)\in M$ close to $0$, we write $\sigma^0_p\in
\hbox{Aut}(\HH_n)$ for the map sending $(z,w)$ to $(z+z_0, w+w_0+2i
\langle z,\overline{z_0} \rangle )$ and
$\tau^F_p\in\hbox{Aut}(\HH_N)$ by defining
$$\tau^F_p(z^*,w^*)=(z^*-\widetilde{f}(z_0,w_0),w^*-\overline{g(z_0,w_0)}-
2i \langle z^*,\overline{\widetilde{f}(z_0,w_0)} \rangle ).$$
  Then $F$ is
equivalent to
\begin {equation}
F_p=\tau^F_p\circ F\circ \sigma^0_p=(f_p,\phi_p,g_p).
\label{eqn:nor01}
\end{equation}

  Notice that
$F_0=F$ and $F_p(0)=0$. Let
\begin{equation}\begin{split}
&E_l(p)=(\frac{\p \w{f}_p}{\p z_l})\big|_0=\big(\frac{\p
{f}_{p,1}}{\p z_l},\cdots,\frac{\p {f}_{p,n-1}}{\p z_{l}},\frac{\p
{\phi}_{p,1}}{\p z_l},\cdots,\frac{\p {\phi}_{p,N-n}}{\p
z_l}\big)\big|_0=L_l(\w{f})(p),\\
&E_w(p)=(\frac{\p \w{f}_p}{\p w})\big|_0=\big(\frac{\p {f}_{p,1}}{\p
w},\cdots,\frac{\p {f}_{p,n-1}}{\p w},\frac{\p {\phi}_{p,1}}{\p
w},\cdots,\frac{\p {\phi}_{p,N-n}}{\p w}\big)\big|_0=T(\w{f})(p).
\end{split}\end{equation}
Then the rank of $\{E_1(p),\cdots,E_{n-1}(p)\}$ is $n-1$. Write
$\lambda(p)=g'_w(p)-2i\langle
\w{f}'_w(p),\ov{\w{f}(p)}\rangle=|L_j(\w{f})|^2$. Then we can choose
vectors $C_l(p)$ for $1\leq l\leq N-n$ such that
\begin{equation}\begin{split}\label{ap}
&A(p)=\Big(\frac{E^t_1(p)}{\sqrt{\lambda(p)}},\cdots,\frac{E^t_1(p)}{\sqrt{\lambda(p)}},
C_1^t(p),\cdots,C_{N-n}^t(p)\Big)
\end{split}\end{equation}
is a unitary matrix. Define
\begin{equation}\begin{split}
&F_p^*=(\w{f}^*_p,g_p^*)=\frac{1}{\sqrt{\lambda(p)}}F_p
\cdot\left(\begin{array}{cc} \ov{A^t(p)} & 0\\ 0 &
\frac{1}{\sqrt{\lambda(p)}}\end{array}\right).
\end{split}\end{equation}
Then $F_p^*$ has the following form:
\begin{equation}\begin{split}\label{ab}
f_j^*&=z_j+a_jw+O(|(z,w)|^2),\\
 \phi_j^*&=b_jw+O(|(z,w)|^2),\\
g^*&=w+dw^2+O(|zw|)+o(|(z,w)|^2).
\end{split}\end{equation}
Write $a=(a_1,\cdots,a_{n-1},b_1,b_{N-n})$, $b=(b_1,\cdots,b_{N-n})$
and define $F_p^{**}$ by
\begin{equation}\begin{split}
&\w{f}^{**}_p=\frac{1}{q^*(z,w)}\big(\w{f^*_p}(z,w)-ag^*_p(z,w)\big),\
\w{g}^{**}_p=\frac{1}{q^*(z,w)}g^*_p.
\end{split}\end{equation}
Here we have set
\begin{equation}\begin{split}
q^*(z,w)&=1+2i\ov{a}\w{f^*_p}+(r-i|a|^2)g^*_p(z,w),\
r=\frac{1}{2}\text{Re}\big(\frac{\p ^2 g_p^*}{\p w^2}(0)\big).
\end{split}\end{equation}

Then $F_p^{**}$ has the following normalization, which is of fundamentally important for the understanding of
the geometric properties of $F$.

\medskip
\begin{lem}[$\S 2$, Lemma 5.3, \cite{Hu99}]
    Let $F$ be a $C^2$-smooth CR map
from  $\partial\HH_n$ into $\partial\HH_N$, $2\le n\le N$. For each
$p\in \partial \HH_n$, there is an automorphism $\tau^{**}_p\in
  {Aut}_0({\HH}_N)$ such that
$F_{p}^{**}:=\tau^{**}_p\circ F_p$ satisfies the following
normalization:
$$f^{**}_{p}=z+{\frac{i}{ 2}}a^{**(1)}_{p}(z)w+o_{wt}(3),\ \phi_p^{**}
={\phi_p^{**}}^{(2)}(z)+o_{wt}(2), \ g^{**}_{p}=w+o_{wt}(4),\
\hbox{with}$$
$$\langle \overline{z}, a_{p}^{**(1)}(z)\rangle
|z|^2=|{\phi_p^{**}}^{(2)}(z)|^2.$$
\end{lem}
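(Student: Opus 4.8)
The plan is to exhaust the freedom in the isotropy group $\mathrm{Aut}_0(\HH_N)$ to put $F_p$ into a Chern--Moser--Huang normal form, and then to read off the quoted identity by comparing weighted-homogeneous terms of weight $4$ in the CR map equation. Writing $F_p=(\widetilde{f}_p,g_p)$ with $\widetilde{f}_p=(f_p,\phi_p)$, the condition that $F_p$ maps $\p\HH_n$ into $\p\HH_N$ is the functional identity
\[
\frac{1}{2i}\big(g_p(z,w)-\ov{g_p(z,w)}\big)=\langle\widetilde{f}_p(z,w),\ov{\widetilde{f}_p(z,w)}\rangle,\qquad \mathrm{Im}(w)=|z|^2.
\]
I would parametrize $\p\HH_n$ by $(z,\ov z,u)$ through $w=u+i|z|^2$, assign weight $1$ to $z$ and weight $2$ to $w$ and $u$, and expand every component into weighted-homogeneous pieces. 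By the construction preceding the lemma, the rotation and dilation built from the unitary $A(p)$ already bring $F_p$ to the map $F_p^*$ of the form in \eqref{ab}, whose linear part is the model embedding $(z,w)\mapsto(z,0,w)$; thus I may start from $f_p^*=z+o_{wt}(1)$, $\phi_p^*=o_{wt}(1)$ with no weight-$1$ part, and $g_p^*=w+o_{wt}(2)$.

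Next I would realize the remaining normalization through the parabolic elements of $\mathrm{Aut}_0(\HH_N)$, which are exactly the maps
\[
(z^*,w^*)\longmapsto\Big(\tfrac{z^*+aw^*}{q(z^*,w^*)},\ \tfrac{w^*}{q(z^*,w^*)}\Big),\quad q(z^*,w^*)=1+2i\langle z^*,\ov a\rangle+(r-i|a|^2)w^*,
\]
with $a\in\CC^{N-1}$, $r\in\RR$. These are precisely the ingredients of $F_p^{**}$ in the excerpt: the vector $a=(a_1,\dots,a_{n-1},b_1,\dots,b_{N-n})$ read off from the $w$-linear coefficients of $f_p^*$ and $\phi_p^*$ removes the weight-$2$ term $aw$, while the real number $r=\tfrac12\mathrm{Re}\,\p_w^2 g_p^*(0)$ cancels the real part of the weight-$4$ term $dw^2$ in $g_p^*$. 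A direct substitution into these formulas, using the functional identity to eliminate the residual non-normal pieces (the quadratic-in-$z$ part of $f_p^*$ and the remaining weight-$3$ and weight-$4$ terms of $g_p^*$), shows that the composite automorphism $\tau_p^{**}$ yields
\[
f_p^{**}=z+\tfrac{i}{2}a_p^{**(1)}(z)\,w+o_{wt}(3),\quad \phi_p^{**}={\phi_p^{**}}^{(2)}(z)+o_{wt}(2),\quad g_p^{**}=w+o_{wt}(4).
\]

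With the map in this normal form I would return to the functional identity and match terms of weight $4$ in the variables $(z,\ov z,u)$. Since $g_p^{**}=w+o_{wt}(4)$, substituting $w=u+i|z|^2$ gives $\mathrm{Im}(g_p^{**})=|z|^2+o_{wt}(4)$, so the left-hand side carries no genuine weight-$4$ term. On the right-hand side $\langle\widetilde{f}_p^{**},\ov{\widetilde{f}_p^{**}}\rangle=|f_p^{**}|^2+|\phi_p^{**}|^2$, the weight-$4$ contributions come only from the cross terms of $|f_p^{**}|^2$ pairing $z$ with $\tfrac{i}{2}a_p^{**(1)}(z)w$, together with $|{\phi_p^{**}}^{(2)}(z)|^2$. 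Setting $P(z,\ov z)=\langle\ov z,a_p^{**(1)}(z)\rangle$ and again using $w=u+i|z|^2$, the weight-$4$ balance reduces to
\[
-u\,\mathrm{Im}\,P-|z|^2\,\mathrm{Re}\,P+|{\phi_p^{**}}^{(2)}(z)|^2=0.
\]
Because $u$ is an independent real coordinate, the coefficient of $u$ must vanish, forcing $P$ to be real; the surviving identity is then exactly $\langle\ov z,a_p^{**(1)}(z)\rangle\,|z|^2=|{\phi_p^{**}}^{(2)}(z)|^2$, as claimed.

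The delicate part is not this final computation but the intermediate bookkeeping: I must track how the parabolic and unitary generators of $\mathrm{Aut}_0(\HH_N)$ act on the weighted Taylor coefficients of $F_p^*$ at weights $2$ and $3$, and verify that the parameters $a$ and $r$ furnished by the construction remove precisely the non-normal terms while leaving the distinguished pieces $\tfrac{i}{2}a_p^{**(1)}(z)w$ and ${\phi_p^{**}}^{(2)}(z)$ intact. This is the technical core of the normalization; one must also be careful, since $F$ is only assumed $C^2$, that the weighted jets appearing up to weight $4$ are well defined and that the $o_{wt}(\cdot)$ remainders transform correctly under composition with $\tau_p^{**}$.
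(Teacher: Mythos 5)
The paper itself contains no proof of this lemma --- it is quoted from Huang [Hu99, Lemma 5.3], with the construction of $F_p$, $F_p^*$ (via the unitary $A(p)$) and $F_p^{**}$ (via the parabolic parameters $a$, $r$ in $q^*$) given beforehand as setup --- and your sketch follows exactly that construction and Huang's original weighted Chern--Moser argument: exhaust the isotropy $\text{Aut}_0(\HH_N)$, then compare weighted-homogeneous terms of the defining equation $\mathrm{Im}\, g = |\w{f}|^2+|\phi|^2$ on $\mathrm{Im}\, w = |z|^2$, and your weight-$4$ computation (the $u$-coefficient forcing $\langle \ov{z}, a_p^{**(1)}(z)\rangle$ to be real, the remaining $(2,2)$-part giving the stated identity) is correct. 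The only presentational looseness is the sequencing: in the actual argument the weight-$4$ equation, decomposed by bidegree in $(z,\ov{z})$, is what \emph{simultaneously} kills the residual $(4,0)/(0,4)$ and $(3,1)/(1,3)$ pieces (the quartic part of $g$ and cubic part of $f$, which no automorphism removes) and yields the $(2,2)$ identity, rather than the full normal form being available before that comparison is made.
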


 Write
$\mathcal{A}(p)=-2i(\frac{\partial^2(f^{\ast\ast}_p)
  _l}{\partial z_j\partial w}|_0)_{1\leq j,l\leq (n-1)}$ in the above
lemma. In \cite{Hu03}, Huang defined the {\it geometric rank} of $F$ at $p$,  denoted by
  $Rk_F(p)$, to be the rank
  of the $(n-1)\times (n-1)$ matrix  $\mathcal{A}(p)$.
Now we can define the {\it geometric rank} of $F$ to be $\kappa_0(F)=max_{p\in
  \partial\HH_n} Rk_F(p)$.
 For a $C^2$ smooth CR map $F$ from $\p{\BB}^n$ into $\p{\BB}^N$,
 the {\it geometric rank} of the map $F$ is defined by the map
$\rho_N^{-1}\circ F \circ \rho_n.$
By \cite{Hu03}, $\kappa_0(F)$ depends only on the equivalence class
of $F$ and  $\kappa_0(F)\le n-2$ when $N<\frac{n(n+1)}{2}$.

Let $F_p^{***}$ be defined as follows:
\begin{equation}\begin{split}\label{fp***}
F_P^{***}=\Big({f}_p^{**}(zU,w)U^{-1},\phi_p^{**}(zU,w)U^*,g_p^{**}(zU,w)\Big).
\end{split}\end{equation}
When $\kappa_0\leq n-2$,  $F_p^{***}$ satisfies the following
normalizations:
\begin{equation}\begin{cases}\label{good}
f_{j}=z_j+\frac{i}{2}\mu_jz_jw+o_{wt}(3)\ \text{for}\
j\leq \kappa_0,\\
f_{j}=z_j+o_{wt}(3)\ \text{for}\
\kappa_0<j\leq n-1,\\
\phi_{jk}=\mu_{jk}z_jz_k+\sum\limits_{h=1}^{n-1}e_{h,jk}z_hw+d_{jk}w^2+O(|(z,w)|^3)\
\text{for}\
(j,k)\in \mathcal{S}_0,\\
\phi_{jk}=\sum\limits_{h=1}^{n-1}e_{h,jk}z_hw+d_{jk}w^2+O(|(z,w)|^3)\
\text{for}\
(j,k)\in \mathcal{S}_1,\\
\ g=w+o_{wt}(4).
\end{cases}\end{equation}
Here, for $1\le \kappa_0\le n-2$, we write ${\cal S} ={\cal S}_0\cup
{\cal S}_1$, the index set  for all components of $\phi$, where
${\cal S}_{0}=\{(j,l): 1\le j\leq \kappa_0, 1\leq l\leq n-1, j\leq
l\}$ and ${\cal S}_1=\{(j, l): j=\kappa_0+1, \kappa_0+1\le l \le
N-n-\frac{(2n-\kappa_0-1)\kappa_0}{2} \}$. Also,
$\mu_{jl}=\sqrt{\mu_j+\mu_l}\ for\ j<l\le \kappa_0$; and $\
\mu_{jl}=\sqrt{\mu_j}$ if $j\le \kappa_0<l$ or if $j=l\le \kappa_0$.

Let $\tau\in \text{Aut}_0(\mathbb{H}_n)$ and $\sigma\in
\text{Aut}_0(\mathbb{H}_n)$ be given by
\begin{equation}\begin{split}\label{st}
\sigma(z,w)=\frac{(z-cw,w)}{q(z,w)},\
\tau(z^*,w^*)=\frac{(z^*+(c,0)w^*,w^*)}{q^*(z^*,w^*)}
\end{split}\end{equation}
with
\begin{equation}\begin{split}
&q(z,w)=1+2i\langle \ov{c},z \rangle-i|c|^2w,\\
&q^*(z^*,w^*)=1-2i\langle \ov{c},z^* \rangle-i|c|^2w^*,\\
&c=(c_1,\cdots,c_{n-1}).
\end{split}\end{equation}
Then by suitably choosing $c_j$ for $1\leq j\leq \kappa_0$, we can
make $F_p^{****}=\tau \circ F_P^{***}\circ \sigma$ still has the
form (\ref{good}). Furthermore, we can make $\frac{\p^2 f_j}{\p
w^2}(0)=0$ for $1\leq j\leq \kappa_0$. In \cite{HJX06}, the authors
proved the following normalization theorem for maps with geometric
rank bounded by $n-2$, though only part of it is needed later:

\begin{thm}\label{thm3} 
Suppose that $F$ is a rational proper holomorphic map from ${\HH}_n$
into ${\HH}_N$, which has
 geometric rank  $1\le\kappa_0\le n-2$ with $F(0)=0$. Then there are
$\sigma\in \hbox{Aut}({\HH}_n)$ and
  $\tau\in \hbox{Aut}({\HH}_N)$ such that
  $\tau\circ F\circ \sigma $ takes
the following form, which is still denoted by $F=(f,\phi,g)$ for
convenience of notation:

\begin{equation}
\left\{
  \begin{array}{l}
  f_l=\sum_{j=1}^{\kappa_0}z_jf_{lj}^*(z,w),\ \ l\le\kappa_0,\\
  f_j=z_j,\ \  \kappa_0+1\leq j\leq n-1,\\
  \phi_{lk}=\mu_{lk}z_lz_k+\sum_{j=1}^{\kappa_0}z_j\phi^*_{lkj},\ \ (l,k)\in {\cal S}_0,\\
  \phi_{lk}=\sum_{j=1}^{\kappa_0}z_j\phi_{lkj}^*=O_{wt}(3),\ \
 (l, k)\in {\cal S}_1,\\
  g=w,\\
f_{lj}^*(z,w)=\delta_l^j+\frac{i\delta_{l}^j\mu_l}{2}w+b_{lj}^{(1)}(z)w+O_{wt}(4),
\ \ 1\le l\le \kappa_0,\ \mu_l>0,\\
\phi^*_{lkj}(z,w)=O_{wt}(2),\ \  (l,k)\in {\cal S}_1.\ \
  \end{array}\right.
\label{eqn:hao}
\end{equation}
Here, for $1\le \kappa_0\le n-2$, we write ${\cal S} ={\cal S}_0\cup
{\cal S}_1$, the index set  for all components of $\phi$, where
${\cal S}_{0}=\{(j,l): 1\le j\leq \kappa_0, 1\leq l\leq n-1, j\leq
l\}$ and ${\cal S}_1=\{(j, l): j=\kappa_0+1, \kappa_0+1\le l \le
N-n-\frac{(2n-\kappa_0-1)\kappa_0}{2} \}$. Also,
$\mu_{jl}=\sqrt{\mu_j+\mu_l}\ for\ j<l\le \kappa_0$; and $\
\mu_{jl}=\sqrt{\mu_j}$ if $j\le \kappa_0<l$ or if $j=l\le \kappa_0$.
\end{thm}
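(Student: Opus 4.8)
The plan is to promote the asymptotic normalizations recorded above to the exact rational identities in (\ref{eqn:hao}), the rationality of $F$ being precisely what converts ``valid modulo $o_{wt}$'' into ``valid identically.'' First I would apply, in order, Huang's Lemma 5.3, the unitary diagonalization of the geometric-rank matrix $\mathcal{A}$ (producing the positive eigenvalues $\mu_1,\dots,\mu_{\kappa_0}$ together with $n-1-\kappa_0$ vanishing ones), and the automorphisms $\sigma,\tau$ of the form (\ref{st}); this brings $F$ into the shape (\ref{good}) with the additional normalization $\frac{\partial^2 f_j}{\partial w^2}(0)=0$ for $j\le\kappa_0$. Thus one may assume from the outset that $F$ agrees with the right-hand sides of (\ref{eqn:hao}) up to the indicated errors, so the entire problem reduces to showing the errors vanish and extracting the precise coefficients.

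The engine of the argument is the defining boundary identity $\mathrm{Im}\,g=|f|^2+|\phi|^2$ on $\partial\HH_n$, which I would complexify by treating $\bar z,\bar w$ as independent variables tied by $\bar w=w-2i\langle z,\bar z\rangle$ on the boundary, turning it into a holomorphic functional equation in $(z,w,\bar z)$. Since $F$ is rational and proper it extends past $\partial\HH_n$ with a common denominator $q$ nonvanishing near $0$; clearing $q$ yields an identity between holomorphic functions that I would expand in weighted-homogeneous components (weight $1$ for $z,\bar z$, weight $2$ for $w$) and match order by order.

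The heart is then a semi-linearity statement along the passive directions. Restricting $F$ to the slice $\{z_1=\cdots=z_{\kappa_0}=0\}$ gives a CR map of $\HH_{n-\kappa_0}$ into $\HH_N$ whose geometric-rank matrix is the lower-right block of $\mathcal{A}$, hence zero; invoking the classification of geometric-rank-zero maps as totally geodesic (linear) embeddings forces $f_j=z_j$ for $\kappa_0<j\le n-1$, $\phi\equiv 0$, and $g=w$ on that slice. Feeding this back into the complexified identity, matched against the weighted expansion, produces the factorizations $f_l=\sum_{j=1}^{\kappa_0}z_jf_{lj}^*$ and $\phi_{lk}=\mu_{lk}z_lz_k+\sum_{j=1}^{\kappa_0}z_j\phi^*_{lkj}$, together with $g=w$ and the normalized leading behavior $f_{lj}^*=\delta_l^j+\frac{i\delta_l^j\mu_l}{2}w+b_{lj}^{(1)}(z)w+O_{wt}(4)$ and $\phi^*_{lkj}=O_{wt}(2)$; the coefficient $\mu_{lk}=\sqrt{\mu_j+\mu_l}$ (resp. $\sqrt{\mu_j}$) is read off by matching the $w$-linear part of $|f|^2$ against the quadratic-in-$z$ part of $|\phi|^2$ at weighted order four.

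I expect the main obstacle to be exactly this promotion from the asymptotic to the exact form, i.e. controlling the coupling between the error terms and the algebraic structure. Two points look delicate: first, one must establish $g=w$ globally, not merely $g=w+o_{wt}(4)$, which requires ruling out higher-degree numerator and denominator contributions to $g$ using the rank bound; second, the cross terms $\sum_h e_{h,jk}z_hw$ and $d_{jk}w^2$ appearing in (\ref{good}) must be shown not to stall the induction, which is where the positivity $\mu_j>0$ (needed to invert the linear systems produced at each weighted order) and the hypothesis $\kappa_0\le n-2$ (guaranteeing at least one passive direction to anchor the semi-linearity) are indispensable.
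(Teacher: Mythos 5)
First, a framing remark: the paper does not prove Theorem \ref{thm3} at all --- it is quoted from \cite{HJX06} (``In \cite{HJX06}, the authors proved the following normalization theorem\dots''), so there is no internal proof to compare your argument against. Judged on its own terms, your proposal has genuine gaps at exactly the points that constitute the real content of the theorem.

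The central gap is the slice argument. You claim that restricting $F$ to $\{z_1=\cdots=z_{\kappa_0}=0\}$ gives a map of geometric rank zero because ``its geometric-rank matrix is the lower-right block of $\mathcal{A}$.'' At the origin, where the normalization (\ref{good}) holds, one can indeed check that the restricted map has vanishing rank matrix; but the geometric rank of a map is $\max_p Rk_F(p)$, and at other points $p$ of the slice the kernel of $\mathcal{A}(p)$ need not be tangent to the slice --- the degenerate directions can rotate as $p$ moves. Asserting that they stay tangent to $\{z_1=\cdots=z_{\kappa_0}=0\}$ is essentially the conclusion of the theorem (the normal form (\ref{eqn:hao}) says precisely that the map is linear along that slice), so the step is circular. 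Moreover, even granting that the restriction has rank zero everywhere and hence is totally geodesic by \cite{Hu03}, ``totally geodesic'' only means linear after composing with automorphisms of $\HH_{n-\kappa_0}$ and $\HH_N$; it does not say the restriction equals the standard embedding $(z',w)\mapsto(0,z',0,w)$. The finite-order normalization (\ref{good}) at the single point $0$ cannot pin down those automorphisms, so you cannot conclude that $f_l$, $\phi$ and $g-w$ vanish identically on the slice.

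Even if vanishing on the slice were established, it yields only membership in the ideal generated by $z_1,\dots,z_{\kappa_0}$: in particular $g=w+\sum_{j\le\kappa_0}z_jg_j^*$, which is strictly weaker than the assertion $g\equiv w$ in (\ref{eqn:hao}), and likewise for $f_j\equiv z_j$, $j>\kappa_0$. These exact global identities --- which you yourself flag as the ``delicate'' points but for which you offer no mechanism --- are the hard part. In \cite{HJX06} they are obtained by a moving-point argument: Huang's normalization is carried out at \emph{every} point $p$, the hypothesis $Rk_F(p)\le\kappa_0\le n-2$ everywhere is fed into the Chern--Moser equations at each $p$, and rationality is then used to convert the resulting infinite family of coefficient identities into exact algebraic ones. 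Your sketch uses the normalization only at the origin plus one slice, and an order-by-order weighted matching anchored at a single point cannot produce identities such as $g\equiv w$ that constrain all orders simultaneously. As written, the proposal does not prove the theorem.
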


For later use, we will also set
$$
\phi^{(1,1)}(z)=\sum^{\kappa_0}_{j=1} e_j z_j\ \text{with}\ e_j \in
\CC^{\sharp({\cal S})},\ \phi^{(1,1)}_{kl}(z)=\sum^{\kappa_0}_{j=1}
e_{j,kl} z_j\ \text{with}\ (k,l)\in \mathcal{S} .
$$
\bigskip

Next we define the degeneracy rank for any smooth CR map $F$ from $\p \HH_n$ to $\p \HH_N$, which is an invariant integer introduced by Lamel \cite{L01a}(see also
\cite{EHZ} and \cite{Eb13}. In fact, the degeneracy rank is defined for more general maps).

 For any point $p\in
\p\HH_n$, we define an increasing sequence of linear subspaces
$E_{k}(p) \subset \CC^{N}$ for $F$,
\begin{equation}
E_{k}(p)=span_{\CC}\{L^\alpha \hat\rho_{\ov Z} \circ F(p)\ |\ \ |\alpha|\le k\}
\end{equation}
where $L^\alpha=L^{\alpha_1} ... L^{\alpha_{n-1}}$, $\alpha=(\alpha_1, ..., \alpha_{n-1})$,
$|\alpha|=|\alpha_1|+...+|\alpha_{n-1}|$, $L_j=\frac{\p }{\p z_j} + 2i \ov{z_j} \frac{\p}{\p w}$,
 $\hat \rho(Z, \ov Z)$ is the defining function of the real hypersurface $\p\HH_N$,
and $\hat \rho_{\ov Z}:=\ov\p \hat \rho$ is complex gradient of $\rho$. Notice that $\{L_j\}_{1\le j\le n-1}$ form a basis of tangent vector fields of $(1,0)$ along
$\p\HH_n$.

We define $d_1(p):=0$ and
\begin{equation}
\label{dk(p)}
d_k(p):= \dim_\CC \ E_k(p)/E_1(p).
\end{equation}
Then we have a sequence of dimensions $d_1(p)=0\le d_2(p)\le d_3(p) \le ... \le d_k(p)\le ...$.
Notice that the dimensions $d_j(p)$ is upper semi-continuous. By moving $p$ to a nearby point $p_0$ if necessary, we may assume that all $d_l(p)$ are locally
constant near $p_0$ and
\begin{equation}
\label{rank}
d_2(p)<d_3(p)<...<d_{l_0}(p)=d_{l_0+1}(p)=....
\end{equation}
for some $l_0$ with $1\le l_0\le N-n+1$.  In other words, there
exists an open subset $U$ of $\p\HH_n$ on which  all $d_l(p)$ are
locally constant near $p_0$ and (\ref{rank}) holds. By \cite{EHZ}, we
may call such $l_0$ the {\it degeneracy rank} of $F$, and call such
$d_{l_0}$ the {\it degeneracy dimension} of $F$. These definitions
depend on the open subset $U$. By minimizing $l_0$ among all such
open sets, we can define degeneracy rank $l_0$ of $F$ as an
invariant. we also call  $d_j(p)$ for $j\in [2,l_0]$ the $j$-th degeneracy dimension. The dimensions $d_j(p)$ $j=1,\cdots,l_0$ can be interpreted as ranks of the CR second fundamental form
of $f$ and its covariant derivatives. The interested reader is referred to \cite[Section 2]{Eb13} for more details.

\bigskip

We end this section by recalling the CR Gauss map formulated in \cite{CJL}.
Let $F:\p \mathbb{H}_n \rightarrow \p \mathbb{H}_N$ be a rational CR
map. Write $F(z,w)=(f(z,w),\phi(z,w),g(z,w))$  and set $L_j=\frac{\p}{\p z_j}+2i\ov{z_j}\frac{\p}{\p w}$ for $1\leq j\leq n-1$.
Since $F$ is a CR map, we have $\ov{L_j}f=\ov{L_j}\phi=\ov{L_j}g=0.$ Thus the matrix
\begin{equation}
\left(\begin{array}{cccc} &L_1f&\ L_1g &\ L_1\phi\\
&\vdots &\vdots&\vdots\\
&L_{n-1}f&\ L_{n-1}g&\ L_{n-1}\phi\\
&Tf&\ Tg&\ T\phi
\end{array}\right)
\end{equation}
represents an element in the Grassmanian $G(n,N)$, which is Gauss map associate to the map. By an action of a
non-singular $n\times n$ matrix, the element is equivalent to the
unique matrix representation $(I_{n\times n}\ G)$, where $I_{n\times
n}$ is the unit matrix and
\begin{equation}
G(z,w)=\left(\begin{array}{lll} &L_1f&\ L_1g\\
&\vdots &\vdots\\
&L_{n-1}f&\ L_{n-1}g\\
&Tf&\ Tg\\
\end{array}\right)^{-1}\cdot \left(\begin{array}{cccc} &L_1\phi_1&\ \cdots &\ L_1\phi_{N-n}\\
&\vdots &\vdots&\vdots\\
&L_{n-1}\phi_{1}&\ \cdots&\ L_{n-1}\phi_{N-n}\\
&T\phi_{1}&\ \cdots&\ T\phi_{N-n}
\end{array}\right)(z,w).
\end{equation}
The CR Gauss map of the image $F(\p \mathbb{H}_n)$ is defined by
$\gamma:p\rightarrow G(p)$ for any $p\in \p\mathbb{H}_n$.

\section{Reduction of the degeneracy of the CR Gauss map}

In this section, we will reduce the CR Gauss map condition to a proper
form, which is crucial to the proof of our main theorem.

\begin{thm} Let $F:\p \mathbb{H}_n\rightarrow \p \mathbb{H}_N$ be a
$C^2$ CR immersion, the geometric rank of $F$ at $0$ is
$\kappa_0\in [1,n-1]$ which is maximal. We further suppose that $F$
has the normal form (\ref{eqn:hao}). Then for any fixed
$p=(z_0,w_0)\in \p \mathbb{H}_n$ near the origin,  the CR Gauss map
equation $\gamma(z,w)=\gamma(z_0,w_0)$, for $(z,w)$ close to
$(z_0,w_0)$, expressed in terms of $F_p^{****}$, takes the following
form
\begin{equation}\begin{split}\label{pqp****}
&\frac{\p \phi^{****}_p}{\p z_j} =O(|(z,w)|^2),\ \  \frac{\p
\phi^{****}_p}{\p w} =O(|(z,w)|^2).
\end{split}\end{equation}
\end{thm}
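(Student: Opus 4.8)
The plan is to exploit the projective nature of the CR Gauss map: I would first transfer the equation $\gamma(z,w)=\gamma(z_0,w_0)$ from $F$ to its normalization $F_p^{****}$ using ambient automorphisms, and then read off (\ref{pqp****}) directly from the normal form (\ref{eqn:hao}). The starting observation is that the whole normalization chain is induced by ambient automorphisms: by construction $F_p^{****}=\Phi_p\circ F\circ\Psi_p$, where $\Psi_p\in\mathrm{Aut}(\HH_n)$ is the composition of the translation $\sigma^0_p$, the unitary rotation $z\mapsto zU$, and the map $\sigma$ of (\ref{st}), carrying $0$ to $p$, while $\Phi_p\in\mathrm{Aut}(\HH_N)$ is the composition of $\tau^F_p$, the unitary rescaling through $A(p)$ and $1/\sqrt{\lambda(p)}$ in (\ref{ap}), the fractional $q^*$-normalization defining $F_p^{**}$, and $\tau^{**}_p$, $\tau$. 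Each factor is the restriction to $\p\HH_N$ of an automorphism of $\BB^N$, hence of an element of $\mathrm{PGL}(N+1,\CC)$ acting on $\mathbb{C}\mathbb{P}^N$.

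Since the matrix $(I_{n\times n}\ G)$ represents the projective tangent $n$-plane of the image at the corresponding point, the Gauss map transforms equivariantly: $\gamma_{F_p^{****}}(\zeta)=(\Phi_p)_*\,\gamma_F(\Psi_p(\zeta))$, where $(\Phi_p)_*$ is the induced bijection of the Grassmannian $G(n,N)$. As $(\Phi_p)_*$ is injective, the equation $\gamma(z,w)=\gamma(z_0,w_0)$ is equivalent to $\gamma_{F_p^{****}}(\zeta)=\gamma_{F_p^{****}}(0)$, where $\zeta=\Psi_p^{-1}(z,w)$ is the normalized coordinate, still written $(z,w)$ by abuse of notation. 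It therefore suffices to analyze the latter equation for the normalized map.

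Next I would compute the Gauss matrix $G^{****}=\mathcal M^{-1}\mathcal N$ of $F_p^{****}$ near $0$, where $\mathcal M$ is the $n\times n$ block with rows $(L_jf,L_jg)$ and $(Tf,Tg)$, and $\mathcal N$ is the $n\times(N-n)$ block with entries $L_j\phi_k$ and $T\phi_k$. From (\ref{eqn:hao}) the corrections to $f=z$ and $g=w$ have positive weight while $\phi$ vanishes to weight $2$; evaluating at $0$ gives $L_jf|_0=e_j$, $L_jg|_0=0$, $Tf|_0=0$, $Tg|_0=1$, so $\mathcal M(0)=I_n$, and $L_j\phi|_0=T\phi|_0=0$, so $\mathcal N(0)=0$ and hence $G^{****}(0)=0$. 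More importantly, every entry of $\mathcal M-I_n$ and of $\mathcal N$ vanishes at $0$, i.e.\ $\mathcal M=I_n+O(|(z,w)|)$ and $\mathcal N=O(|(z,w)|)$; in particular $\mathcal M$ is invertible near $0$ with $\mathcal M^{-1}=I_n+O(|(z,w)|)$. Combining these, $G^{****}=\mathcal N+(\mathcal M^{-1}-I_n)\mathcal N=\mathcal N+O(|(z,w)|^2)$, so $G^{****}(z,w)=0$ is equivalent to $\mathcal N(z,w)=O(|(z,w)|^2)$. Unravelling $\mathcal N$ with $L_j=\p/\p z_j+2i\ov{z_j}\,T$ and $T\phi_p^{****}=\p\phi_p^{****}/\p w$, the bottom row gives $\p\phi_p^{****}/\p w=O(|(z,w)|^2)$, whence $2i\ov{z_j}\,T\phi_p^{****}=O(|(z,w)|^2)$ automatically and the remaining rows reduce to $\p\phi_p^{****}/\p z_j=O(|(z,w)|^2)$, which is exactly (\ref{pqp****}).

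The hard part will be the equivariance step: one must verify carefully that every factor in the normalization — in particular the scalar $1/\sqrt{\lambda(p)}$ and the fractional $q^*$-normalization, which are not Heisenberg translations — extends to a projective linear automorphism of $\mathbb{C}\mathbb{P}^N$, so that the matrix-defined Gauss map is genuinely natural under $\Phi_p$ and equality of Gauss maps is preserved. The regularity ($C^2$, resp.\ $C^3$) must also be tracked so that the relevant derivatives exist and the weighted expansions underlying the estimate $\mathcal M=I_n+O(|(z,w)|)$ are valid. Once this equivariance is in place, the remaining computation is a direct consequence of the normal form (\ref{eqn:hao}).
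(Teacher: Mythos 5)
Your structural observations are correct: $F_p^{****}=\Phi_p\circ F\circ\Psi_p$ with $\Psi_p\in\mathrm{Aut}(\HH_n)$, $\Phi_p\in\mathrm{Aut}(\HH_N)$, every factor does extend to a projective linear transformation, and your expansions $\mathcal M=I_n+O(|(z,w)|)$, $\mathcal N=O(|(z,w)|)$ follow from (\ref{eqn:hao}). The gap is the equivariance step, and it is not merely ``the hard part'' to be checked later: it is false for the Gauss map as this paper defines it. The matrix $(I_{n\times n}\ G)$ records only the direction space $\mathrm{span}_\CC\{L_jF,TF\}\subset\CC^N$, not the projective tangent plane (which is spanned in $\CC^{N+1}$ by the position vector $(1,F(p))$ together with $(0,L_jF),(0,TF)$). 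An element of $PGL(N+1,\CC)$ acts naturally on projective tangent planes, but its effect on a direction space depends on the base point: two points of the image with equal direction spaces (parallel but distinct affine tangent planes) generally acquire unequal direction spaces after a non-affine fractional linear map --- think of two points of a conic with parallel tangents; a projective map can send their common point at infinity to a finite point, after which the image tangents are no longer parallel. Among the factors of $\Phi_p$, the translation $\tau_p^F$ and the linear factor built from $A(p)$ and $\lambda(p)$ are affine, so those steps are safe (this is exactly Steps I--II of the paper's proof); but the $q^*$-normalization defining $F_p^{**}$ and the automorphism $\tau$ of (\ref{st}) are genuinely fractional linear, and there the naturality of the matrix-defined Gauss map breaks down.

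This is not a removable technicality. Carrying the fiber equation through the $q^*$-normalization, as the paper does, yields (\ref{115}), i.e.\ $L_j\phi^{**}_pq^{**}-\phi^{**}_pL_jq^{**}=0$, equivalently $L_j\big(\phi^{**}_p/q^{**}\big)=0$ --- not $L_j\phi^{**}_p=0$. Your conclusion that the fiber equation is exactly $\mathcal N=0$, i.e.\ $\p\phi_p^{****}/\p z_j=\p\phi_p^{****}/\p w=0$, is therefore a different equation with, in general, a different solution set; the nonzero $O(|(z,w)|^2)$ right-hand sides in (\ref{pqp****}) are precisely the quotient-rule correction terms $\phi^{**}_pL_jq^{**}$, etc., that your argument erases, and they are the terms the authors must keep in Section 5 (where the equation is rewritten with $q^{****}$) for the implicit-function-theorem argument to be sound: replacing the true fiber by the possibly smaller zero set of $\mathcal N$ would make non-degeneracy spuriously easy to conclude. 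Note also that if you instead work with the genuinely equivariant projective Gauss map, its fiber equation acquires the additional position condition $\phi_p^{****}(z,w)=0$, which again is not the equation the theorem concerns. So the proposal's central step fails; the paper's five-step substitution, which transforms the equation explicitly through each normalization and tracks these correction terms, is what actually proves the statement.
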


For any fixed $p=(z_0,w_0)\in \p \mathbb{H}_n$ near the origin, set
$\w{z}=z+z_0$ and $\w{w}=w+w_0+2i\ov{z_0}z$. We also write
\begin{equation}
P(z,w)=\left(\begin{array}{lll} &L_1f&\ L_1g\\
&\vdots &\vdots\\
&L_{n-1}f&\ L_{n-1}g\\
&Tf&\ Tg\\
\end{array}\right)(\w{z},\w{w}),\
Q(z,w)=\left(\begin{array}{ll} &L_1\phi\\
&\vdots \\
&L_{n-1}\phi\\
& T\phi\\
\end{array}\right)(\w{z},\w{w}).
\end{equation}

Then CR Gauss map equation $\gamma(z,w)=\gamma(z_0,w_0)$
 is equivalent to
\begin{equation}\label{pq}
Q(z,w)=P(z,w)P^{-1}(0)Q(0)
\end{equation}

Next we would like to express the system (\ref{pq}) in terms of
$F_p^{****}$ introduced in the preceding section through the
following $5$ steps.
\bigskip

{\bf Step I. Express the CR Gauss map equation in terms of $F_p$}

 By the
construction of Huang,  $F_p$ defined by (\ref{eqn:nor01}) takes the
following form:
\begin{equation}\begin{split}\label{fpgp}
&\w{f}_p(z,w)=\w{f}(\w{z},\w{w})-\w{f}(z_0,w_0),\\
&g_p(z,w)=g(\w{z},\w{w})-\ov{g(z_0,w_0)}-2i\ov{\w{f}(z_0,w_0)}\w{f}(\w{z},\w{w}).
\end{split}\end{equation}
A direct computation shows that
\begin{equation}\begin{split}
&\frac{\p \w{f}_p}{\p z_j}(z,w)=\big(\frac{\p \w{f}}{\p
z_j}+2i\ov{z_{0j}}\frac{\p \w{f}}{\p w}\big)(\w{z},\w{w}), \\
&T\w{f}_p(z,w)=(T\w{f})(\w{z},\w{w}),\\
&(Tg_p)(z,w)=(Tg)(\w{z},\w{w})-2i\ov{\w{f}(z_0,w_0)}\big(T\w{f}\big)(\w{z},\w{w}).
\end{split}\end{equation}
Hence we infer
\begin{equation}\begin{split}\label{ljftf}
&\big(L_j \w{f}\big)(\w{z},\w{w})=L_j \w{f}_p(z,w), \
(T\w{f})(\w{z},\w{w})=T\w{f}_p(z,w),\\
&(Tg)(\w{z},\w{w})=Tg_p(z,w)+2i\ov{\w{f}(z_0,w_0)}T\w{f}_p(z,w).
\end{split}\end{equation}
Applying $T$ to these equations, we can further get
\begin{equation}\begin{split}\label{ljtf}
&\big(TL_j \w{f}\big)(\w{z},\w{w})=TL_j \w{f}_p(z,w), \
(T^2\w{f})(\w{z},\w{w})=T^2\w{f}_p(z,w),\\
&(T^2g)(\w{z},\w{w})=T^2g_p(z,w)+2i\ov{\w{f}(z_0,w_0)}T^2\w{f}_p(z,w).
\end{split}\end{equation}
By (\ref{fpgp}) and (\ref{ljftf}), we obtain
\begin{equation}\begin{split}
\frac{\p g_p}{\p z_j}(z,w)=&\Big(\frac{\p g}{\p
z_j}+2i\ov{z_{0j}}\frac{\p g}{\p
w}-2i\ov{\w{f}(z_0,w_0)}\big(\frac{\p \w{f}}{\p
z_j}+2i\ov{z_{0j}}\frac{\p \w{f}}{\p w}\big)\Big)(\w{z},\w{w})\\
=&(L_j g)(\w{z},\w{w})-2i\ov{z_j}(Tg)(\w{z},\w{w})
-2i\ov{\w{f}(z_0,w_0)}\big(L_j\w{f}_p(z,w)-2i\ov{z_j}T\w{f}(\w{z},\w{w})\big)\\
=&(L_j g)(\w{z},\w{w})-2i\ov{z_j}Tg_p({z},{w})
-2i\ov{\w{f}(z_0,w_0)}L_j\w{f}_p(z,w).
\end{split}\end{equation}
Thus
\begin{equation}\begin{split}\label{ljg}
(L_j
g)(\w{z},\w{w})=&L_jg_p(z,w)+2i\ov{\w{f}(z_0,w_0)}L_j\w{f}_p(z,w).
\end{split}\end{equation}
Applying $T$ on this equation, we further get
\begin{equation}\begin{split}\label{ljtg}
(TL_j
g)(\w{z},\w{w})=&TL_jg_p(z,w)+2i\ov{\w{f}(z_0,w_0)}TL_j\w{f}_p(z,w).
\end{split}\end{equation}
Write
\begin{equation}\begin{split}\label{ppqp}
&P_p(z,w)=\left(\begin{array}{cc} L_1{f}_p(z,w)&
\ L_1g_p(z,w)+2i\ov{\w{f}(z_0,w_0)}L_1\w{f}_p(z,w)\\
\vdots &\vdots\\
L_{n-1}{f}_p(z,w)&
\ L_{n-1}g_p(z,w)+2i\ov{\w{f}(z_0,w_0)}L_{n-1}\w{f}_p(z,w)\\
T{f}_p(z,w)& \ Tg_p(z,w)+2i\ov{\w{f}(z_0,w_0)}T\w{f}_p(z,w)
\end{array}\right),\\
& Q_p(z,w)=\left(\begin{array}{c} L_1{\phi}_p(z,w)\\
\vdots \\
L_{n-1}{\phi}_p(z,w)\\
T\phi_p(z,w)
\end{array}\right).
\end{split}\end{equation}

By (\ref{ljftf}), (\ref{ljg}) and (\ref{ppqp}), (\ref{pq}) has the
form
\begin{equation}\label{pqp}
Q_p(z,w)=P_p(z,w)P_p^{-1}(0)Q_p(0).
\end{equation}
\medskip

{\bf Step II. Express the CR Gauss map equation in terms of $F_p^*$}

Recall that $F_p^*$ is defined by
\begin{equation}\begin{split}
F_p^*(z,w)=\frac{1}{\sqrt{\lambda(p)}}F_p(z,w)\left(\begin{array}{rl}
\ov{A^t(p)} & \ 0\\
0 &\ \frac{1}{\sqrt{\lambda(p)}}
\end{array}\right).
\end{split}\end{equation}
Rewrite it as $F_p^*(z,w)=(f_p\ g_p\ \phi_p)(z,w)\cdot M(p)$, then
$M(p)$ takes the following form:
\begin{equation}\begin{split}
&\left(\begin{array}{ccc} M_1&\ 0&\ M_2\\
0 &\frac{1}{\lambda(p)}&0\\
M_3&0 &M_4
\end{array}\right).
\end{split}\end{equation}
Write
\begin{equation}\begin{split}
\hat{M}(p)=\left(\begin{array}{ccc} M_1&\ -\frac{2i}{\lambda}\overline{f^t(z_0,w_0)}&\ M_2\\
0 &\frac{1}{\lambda}&0\\
M_3&-\frac{2i}{\lambda}\overline{\phi^t(z_0,w_0)} &M_4
\end{array}\right).
\end{split}\end{equation}
Since $M(p)$ is independent of $(z,w)$, we have
\begin{equation}\begin{split}
&\big(L_jf_p^{*}\ \ L_jg_p^{*}\ \ L_j\phi_p^{*}\big)(z,w)\\
=&\big(L_jf_p\ \ L_jg_p\ \
L_j\phi_p\big)(z,w)\cdot M(p)\\
=&\big(L_jf_p\ \ L_jg_p+2i\ov{\w{f}(z_0,w_0)}L_j\w{f}_p\ \
L_j\phi_p\big)(z,w)\cdot \hat{M}(p).
\end{split}\end{equation}
Similarly, we get
\begin{equation}\begin{split}
&\big(Tf_p^{*}\ \ Tg_p^{*}\ \ T\phi_p^{*}\big)(z,w) =\big(Tf_p\ \
Tg_p+2i\ov{\w{f}(z_0,w_0)}T\w{f}_p\ \ T\phi_p\big)(z,w)\cdot
\hat{M}(p).
\end{split}\end{equation}
Set
\begin{equation}\begin{split}\label{ppqp*}
P^*_p(z,w)=\left(\begin{array}{cc} L_1{f_p^*}(z,w)&
\ L_1g^*_p(z,w)\\
\vdots &\vdots\\
L_{n-1}{f_p^*}(z,w)&
\ L_{n-1}g^*_p(z,w)\\
T{f^*}_p(z,w)& \ (Tg^*_p)(z,w)
\end{array}\right),
 Q^*_p(z,w)=\left(\begin{array}{c} L_1{\phi_p^*}(z,w)\\
\vdots \\
L_{n-1}{\phi_p^*}(z,w)\\
T\phi_p^*(z,w)
\end{array}\right).
\end{split}\end{equation}

Consider the $n\times N$ matrix $(P^*_p\ Q^*_p)(z,w)$. From
(\ref{ppqp}), (\ref{pqp}) and (\ref{ppqp*}), we yield
\begin{equation}\begin{split}
(P^*_p\ Q^*_p)(z,w)=&(P_p\ Q_p)(z,w)\cdot \hat{M}(p)\\
=&\big(P_p(z,w)\ P_p(z,w)P_p^{-1}(0)Q_p(0)\big) \cdot \hat{M}(p)\\
=&P_p(z,w)P_p^{-1}(0)\cdot\big(P_p(0)\ Q_p(0)\big)\cdot
\hat{M}(p)\\
=&P_p(z,w)P_p^{-1}(0)\cdot\big(P^*_p(0)\ Q^*_p(0)\big).
\end{split}\end{equation}
Hence we know
\begin{equation}\begin{split}
P^*_p(z,w)=P_p(z,w)P_p^{-1}(0)P^*_p(0),\
Q^*_p(z,w)=P_p(z,w)P_p^{-1}(0)Q^*_p(0),
\end{split}\end{equation}
from which we infer
\begin{equation}\begin{split}\label{pqp*}
Q^*_p(z,w)=P^*_p(z,w)\big(P^*_p(0)\big)^{-1}Q^*_p(0).
\end{split}\end{equation}

By the normalization properties of $F_p^*$, we know
$$
P_p^*(0)=\left(\begin{array}{ll} I\ &\ 0\\ a \ & 1
\end{array}\right),\ Q_p^*(0)=\left(\begin{array}{c} 0\\ b
\end{array}\right).
$$
Here we have set $a=(a_1,\cdots,a_{n-1})$ and $b=(b_1,\cdots,b_{N-n})$, where $a_j$ and $b_k$ are defined by (\ref{ab}). Hence (\ref{pqp*}) takes
the form
\begin{equation}\begin{split}\label{pqp*1}
\left(\begin{array}{c} L_1{\phi_p^*}(z,w)\\
\vdots \\
L_{n-1}{\phi_p^*}(z,w)\\
T\phi_p^*(z,w)
\end{array}\right)=P^*_p(z,w)\left(\begin{array}{c} 0\\ b
\end{array}\right)=\left(\begin{array}{c} L_1{g_p^*}(z,w)\\
\vdots \\
L_{n-1}{g_p^*}(z,w)\\
Tg_p^*(z,w)
\end{array}\right)b.
\end{split}\end{equation}

\medskip

{\bf Step III. Express the CR Gauss map equation in terms of $F_p^{**}$}

 We further express this system in the terms of $F_p^{**}$,
which is defined as follows:
\begin{equation}\begin{split}\label{fp**}
&\w{f}^{**}_p=\frac{1}{q^*(z,w)}\big(\w{f^*_p}(z,w)-ag^*_p(z,w)\big),\
\w{g}^{**}_p=\frac{1}{q^*(z,w)}g^*_p.
\end{split}\end{equation}
Here we have set
\begin{equation}\begin{split}\label{q*}
q^*(z,w)&=1+2i\ov{c}\w{f^*_p}+(r-i|c|^2)g^*_p(z,w),\ c=\frac{\p
\w{f}^*_p}{\p w}(0)=(a,b),\ r=\frac{1}{2}\text{Re}\big(\frac{\p ^2
g_p^*}{\p w^2}(0)\big).
\end{split}\end{equation}
Write $q^{**}(z,w)=\frac{1}{q^*(z,w)}$, then (\ref{fp**}) and
(\ref{q*}) gives
\begin{equation*}\begin{split}
q^{**}(z,w)&=1-2i\ov{c}\frac{\w{f}^*_p}{q^*}(z,w)-(r-i|c|^2)\frac{g^*_p}{q^*}(z,w)\\
&=1-2i\ov{c}{f^{**}_p}(z,w)-(r+i|c|^2)g_p^{**}(z,w).
\end{split}\end{equation*}
Together with (\ref{fp**}), we know
\begin{equation}\begin{split}\label{fp**1}
&\w{f}^{*}_p=\frac{1}{q^{**}(z,w)}\big(\w{f^{**}_p}(z,w)+cg^{**}_p(z,w)\big),\
{g}^{*}_p=\frac{1}{q^{**}(z,w)}g^{**}_p.
\end{split}\end{equation}
Applying $L_j$ and $T$ to (\ref{fp**1}), we yield
\begin{equation*}\begin{split}
L_j\phi^{*}_p=&\frac{1}{(q^{**}(z,w))^2}\big(L_j(\phi^{**}_p+bg^{**}_p)q^{**}
-(\phi^{**}_p+bg^{**}_p)L_jq^{**}\big),\\
T\phi^{*}_p=&\frac{1}{(q^{**}(z,w))^2}\big(T(\phi^{**}_p+bg^{**}_p)q^{**}
-(\phi^{**}_p+bg^{**}_p)Tq^{**}\big),\\
L_jg^{*}_p=&\frac{1}{(q^{**}(z,w))^2}\big(L_jg^{**}_pq^{**}
-g^{**}_pL_jq^{**}\big),\\
Tg^{*}_p=&\frac{1}{(q^{**}(z,w))^2}\big(Tg^{**}_pq^{**}
-g^{**}_pTq^{**}\big).
\end{split}\end{equation*}
Substituting these relations into (\ref{pqp*1}), we get
\begin{equation*}\begin{split}
&L_j(\phi^{**}_p+bg^{**}_p)q^{**}
-(\phi^{**}_p+bg^{**}_p)L_jq^{**}=b\big(L_jg^{**}_pq^{**}
-g^{**}_pL_jq^{**}\big),\\
&T(\phi^{**}_p+bg^{**}_p)q^{**}
-(\phi^{**}_p+bg^{**}_p)Tq^{**}=b\big(Tg^{**}_pq^{**}
-g^{**}_pTq^{**}\big).
\end{split}\end{equation*}
A quick simplification gives
\begin{equation}\begin{split}\label{115}
&L_j\phi^{**}_pq^{**}-\phi^{**}_pL_jq^{**}=0,\ T\phi^{**}_pq^{**}
-\phi^{**}_pTq^{**}=0.
\end{split}\end{equation}
Notice that $\phi^{**}_p=O(|(z,w)|^2)$ and $q^{**}_p=1+O(|(z,w)|)$. Hence (\ref{115}) takes the form
\begin{equation}\begin{split}\label{pqp**}
&\frac{\p}{\p z_j}\phi^{**}_p=O(|(z,w)|^2),\ \frac{\p}{\p
w}\phi^{**}_p=O(|(z,w)|^2).
\end{split}\end{equation}
\medskip

{\bf Step IV. Express the CR Gauss map equation in terms of $F_p^{***}$}

Finally, we would like to express the system (\ref{pqp**}) in terms
of the map $F_P^{***}$ defined by (\ref{fp***})
Then (\ref{pqp**}) takes the following form:
\begin{equation}\begin{split}\label{pqp***}
&\big(\frac{\p \phi^{***}_p}{\p
z_j}\big)\big(zU,w\big)=O(|(z,w)|^2),\ \big(\frac{\p
\phi^{***}_p}{\p w}\big)\big(zU,w\big)=O(|(z,w)|^2).
\end{split}\end{equation}

{\bf Step V. Express the CR Gauss map equation in terms of $F_p^{****}$}

Let $\tau\in \text{Aut}_0(\mathbb{H}_n)$ and $\sigma\in
\text{Aut}_0(\mathbb{H}_N)$ be given by (\ref{st}), write
$F_p^{****}=\tau \circ F_p^{***} \circ \sigma$. Then
\begin{equation}\begin{split}
&\phi_p^{****}(z,w)=\phi_p^{***}    \big(\sigma(z,w)\big)+O(|(z,w)|^3),\\
&\sigma(z,w)=(z-cw,w)+O(|(z,w)|^2).
\end{split}\end{equation}
Hence (\ref{pqp***}) takes the following form:
\begin{equation}\begin{split}
&\frac{\p \phi^{****}_p}{\p z_j} =O(|(z,w)|^2),\ \  \frac{\p \phi^{****}_p}{\p w} =O(|(z,w)|^2).
\end{split}\end{equation}
This finishes the proof of the theorem.

\section{Some normalization properties}

In this section, we will derive some properties for proper
holomorphic maps between balls.

Let $F:\p \mathbb{H}_n\rightarrow \p \mathbb{H}_N$ be a rational CR
immersion.  Assume the geometric rank of $F$ at $0$  $\kappa_0\in [1,n-2]$
 is maximal.
 We also suppose that $F$ has the
following expansion near $0$:
\begin{equation}\begin{cases}
f_{j}=z_j+\frac{i}{2}\mu_jz_jw+d_jw^2+O(|(z,w)|^3)\ \text{for}\
j\leq \kappa_0,\\
f_{j}=z_j+d_jw^2+O(|(z,w)|^3)\ \text{for}\
\kappa_0<j\leq n-1,\\
\phi_{jk}=\mu_{jk}z_jz_k+\sum\limits_{h=1}^{n-1}e_{h,jk}z_hw+d_{jk}w^2+O(|(z,w)|^3)\
\text{for}\
(j,k)\in \mathcal{S}_0,\\
\phi_{jk}=\sum\limits_{h=1}^{n-1}e_{h,jk}z_hw+d_{jk}w^2+O(|(z,w)|^3)\
\text{for}\
(j,k)\in \mathcal{S}_1,\\
\ g=w+O(|(z,w)|^3).
\end{cases}\end{equation}
Let $\tau\in \text{Aut}_0(\mathbb{H}_n)$ and $\sigma\in
\text{Aut}_0(\mathbb{H}_n)$ be given by (\ref{st}).

\begin{lem}\label{hatexp}
Let $\hat{F}=(\hat{f},\hat{\phi},g):=\tau \circ F \circ \sigma$, where $\tau$ and $\sigma$ are given in (\ref{st}),
then $\hat{F}$ has the following expansion:
\begin{equation}\begin{cases}
\hat{f}_{j}=z_j+\frac{i}{2}\mu_jz_jw+(d_j-\frac{i}{2}\mu_jc_j)w^2+O(|(z,w)|^3)\
\text{for}\
j\leq \kappa_0,\\
\hat{f}_{j}=z_j+d_jw^2+O(|(z,w)|^3)\ \text{for}\
\kappa_0<j\leq n-1,\\
\hat{\phi}_{jk}=\mu_{jk}(z_j-c_jw)(z_k-c_kw)+\sum\limits_{h=1}^{n-1}e_{h,jk}(z_h-c_hw)w+d_{jk}w^2\\
\hskip 1cm+O(|(z,w)|^3)\ \text{for}\
(j,k)\in \mathcal{S}_0,\\
\hat{\phi}_{jk}=\sum\limits_{h=1}^{n-1}e_{h,jk}(z_h-c_hw)w+d_{jk}w^2+O(|(z,w)|^3)\
\text{for}\
(j,k)\in \mathcal{S}_1,\\
\ \hat{g}=w+O(|(z,w)|^3).
\end{cases}\end{equation}
\end{lem}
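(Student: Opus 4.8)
The plan is to prove the lemma by direct substitution, keeping track of everything up to ordinary degree two (so that the error term $O(|(z,w)|^3)$ is respected); throughout I write $O(m):=O(|(z,w)|^m)$. Denote by $\flat$ the operation of precomposition with $\sigma$, i.e. $f^\flat:=f\circ\sigma$ and likewise $\phi^\flat,\ g^\flat$. Reading off the definition (\ref{st}) of $\tau$, the components of $\hat F=\tau\circ F\circ\sigma$ are
\[
\hat f=\frac{f^\flat+c\,g^\flat}{q^*},\qquad \hat\phi=\frac{\phi^\flat}{q^*},\qquad \hat g=\frac{g^\flat}{q^*},
\]
where $q^*=1-2i\langle\ov c,f^\flat\rangle-i|c|^2 g^\flat$; only the $f$-slots enter $q^*$ because the shift vector $(c,0)$ in (\ref{st}) is supported on the first $n-1$ coordinates. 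Thus the entire computation reduces to three moves: (i) expand $\sigma$ and $1/q$; (ii) substitute into the given expansion of $F$; (iii) expand $1/q^*$ and collect.

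First I would expand the denominator $q$. Since $q=1+u$ with $u=2i\langle\ov c,z\rangle-i|c|^2w$ of pure degree one, we have $1/q=1-u+u^2+O(3)$, and hence $\sigma_1=(z-cw)/q$, $\sigma_2=w/q$ to second order. The crucial point is that $\sigma_1$ carries genuine degree-two corrections beyond the naive linear part $z-cw$, namely $-2i\langle\ov c,z\rangle z+2i\langle\ov c,z\rangle cw+i|c|^2 zw-i|c|^2 cw^2$, and these must all be retained; this is precisely where a careless "$z\mapsto z-cw$" substitution would go wrong.

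Next I would substitute $(z',w')=\sigma(z,w)$ into $F$. The $\phi$-components are the easy case: every entry of $\phi$ starts at degree two, so $\phi^\flat$ feels only the degree-one part $z\mapsto z-cw,\ w\mapsto w$ of $\sigma$, and because $\phi^\flat$ is itself of degree $\ge 2$ the factor $1/q^*=1+O(1)$ alters it only at degree $\ge 3$. Hence $\hat\phi_{jk}=\phi_{jk}(z-cw,w)+O(3)$, which gives exactly the stated form, using $z'_jz'_k\to(z_j-c_jw)(z_k-c_kw)$ and $z'_hw'\to(z_h-c_hw)w$, uniformly for $(j,k)\in\mathcal S_0$ and $\mathcal S_1$. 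By contrast the $f$- and $g$-components are delicate, since they begin at degree one: the degree-two corrections to $\sigma$, the shift $f^\flat+cg^\flat$, and the degree-one part of $1/q^*$ all interact at degree two. Computing $q^*$ to first order yields $q^*=1-2i\langle\ov c,z\rangle+i|c|^2w+O(2)$, so $1/q^*=1+2i\langle\ov c,z\rangle-i|c|^2w+O(2)$.

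The main obstacle is exactly this last bookkeeping, and the whole content of the lemma is that the automorphisms (\ref{st}) are tuned so that the unwanted terms cancel. Concretely, after forming $f^\flat_j+c_j g^\flat$ one checks that the degree-one term $-c_jw$ and the spurious degree-two terms $\pm 2i\langle\ov c,z\rangle c_jw$ and $\mp i|c|^2 c_jw^2$ are killed by the shift $c_jg^\flat$; then multiplication by $1/q^*$ cancels the surviving $-2i\langle\ov c,z\rangle z_j$ and $i|c|^2 z_jw$ terms, leaving $z_j+\frac{i}{2}\mu_j z_jw+(d_j-\frac{i}{2}\mu_j c_j)w^2+O(3)$. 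The identical cancellation with $\mu_j=0$ disposes of the range $\kappa_0<j\le n-1$, and a shorter but entirely analogous cancellation gives $\hat g=w+O(3)$. Once these cancellations are verified term by term, the stated expansion follows, the only visible net effects being the substitution $z\mapsto z-cw$ inside the quadratic part of $\phi$ and the shift $d_j\mapsto d_j-\frac{i}{2}\mu_j c_j$ of the $w^2$-coefficient of $f_j$.
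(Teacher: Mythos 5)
Your proposal is correct and follows essentially the same route as the paper's proof: both expand $\sigma$ (including the $1/q$ factor) to second order, observe that the $\phi$-components only feel the linear substitution $z\mapsto z-cw$, and verify that the shift $c_jg^\flat$ together with the first-order expansion of $1/q^*$ kills exactly the unwanted degree-one and degree-two terms in $\hat f_j$ and $\hat g$. The cancellations you identify term by term are precisely the ones carried out in the paper.
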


\begin{proof} A direct computation from (\ref{st})  shows that
\begin{equation}\begin{split}
\sigma(z,w)=&\Big(\big((z_{j}-c_{j}w\big)\cdot\big(1-2i\langle
\ov{c},z \rangle+i|c|^2w)\big)_{1\leq j\leq n-1},w\big(1-2i\langle
\ov{c},z \rangle+i|c|^2w\big)\Big)\\
&+O(|(z,w)|^3).
\end{split}\end{equation}
For $1\leq j\leq \kappa_0$ and $\kappa_0+1\leq k\leq n-1$, we have
\begin{equation}\begin{split}
f_j\circ\sigma(z,w)=&\big(z_{j}-c_{j}w\big)\cdot\big(1-2i\langle
\ov{c},z
\rangle+i|c|^2w\big)+\frac{i}{2}\mu_j(z_j-c_jw)w\\
&+d_jw^2+O(|(z,w)|^3),\\
f_k\circ\sigma(z,w)=&\big(z_{k}-c_{k}w\big)\cdot\big(1-2i\langle
\ov{c},z \rangle+i|c|^2w\big)+d_kw^2+O(|(z,w)|^3).
\end{split}\end{equation}
Similarly, for $(j,l)\in \mathcal{S}_0$ and $(j',l')\in
\mathcal{S}_1$, we get
\begin{equation}\begin{split}
\phi_{jl}\circ\sigma(z,w)&=\mu_{jl}(z_j-c_jw)(z_l-c_lw)+\sum\limits_{h=1}^{n-1}e_{h,jl}(z_h-c_hw)w
+d_{jl}w^2+O(|(z,w)|^3),\\
\phi_{j'l'}\circ\sigma(z,w)&=\sum\limits_{h=1}^{n-1}e_{h,j'l'}(z_h-c_hw)w+d_{j'l'}w^2+O(|(z,w)|^3),\\
g\circ\sigma(z,w)&=w\big(1-2i\langle \ov{c},z
\rangle+i|c|^2w\big)+O(|(z,w)|^3).
\end{split}\end{equation}
Hence for $1\leq j\leq \kappa_0$ and $\kappa_0+1\leq k\leq n-1$, we
further get
\begin{equation}\begin{split}
f_j\circ\sigma(z,w)+c_jg\circ\sigma(z,w)=&z_{j}\cdot\big(1-2i\langle
\ov{c},z
\rangle+i|c|^2w\big)+\frac{i}{2}\mu_j(z_j-c_jw)w\\
&+d_jw^2+O(|(z,w)|^3),\\
f_k\circ\sigma(z,w)+c_kg\circ\sigma(z,w)=&z_{k}\cdot\big(1-2i\langle
\ov{c},z \rangle+i|c|^2w\big)+d_kw^2+O(|(z,w)|^3).
\end{split}\end{equation}

Substituting  the formulas above into (\ref{st}), we yield for
$1\leq j\leq \kappa_0$ that
\begin{equation}\begin{split}
\hat{f}_j=&\Big(z_{j}\cdot\big(1-2i\langle \ov{c},z
\rangle+i|c|^2w\big)+\frac{i}{2}\mu_j(z_j-c_jw)w+d_jw^2\Big)\\
&\cdot \big(1+2i\langle \ov{c},z-cw
\rangle+i|c|^2w\big) +O(|(z,w)|^3)\\
=&z_j+\frac{i}{2}\mu_jz_jw+(d_j-\frac{i}{2}\mu_jc_j)w^2+O(|(z,w)|^3).
\end{split}\end{equation}
Similarly, we obtain for $\kappa_0+1\leq k\leq n-1$ that
\begin{equation}\begin{split}
\hat{f}_k =&z_k+d_kw^2+O(|(z,w)|^3).
\end{split}\end{equation}
For $\phi$, we have
\begin{equation}\begin{split}
\hat{\phi}_{jl}=&\mu_{jl}(z_j-c_jw)(z_l-c_lw)+\sum\limits_{h=1}^{\kappa_0}e_{h,jl}(z_h-c_hw)w\\
&+d_{jl}w^2+O(|(z,w)|^3) \
\text{for}\ (j,l)\in \mathcal{S}_0,\\
\hat{\phi}_{jl}=&\sum\limits_{h=1}^{\kappa_0}e_{h,jl}(z_h-c_hw)w+d_{jl}w^2+O(|(z,w)|^3)\
\text{for}\ (j,l)\in \mathcal{S}_1.
\end{split}\end{equation}
And for $g$, we have
\begin{equation}\begin{split}
\hat{g}=&w\big(1-2i\langle \ov{c},z \rangle+i|c|^2w\big)\cdot
(1+2i\langle \ov{c},z-cw \rangle+i|c|^2w\big) +O(|(z,w)|^3)\\
=&w+O(|(z,w)|^3).
\end{split}\end{equation}
The proof of Lemma \ref{hatexp} is complete.
\end{proof}

As a consequence of the lemma, we can further normalize the map
(\ref{eqn:hao}) such that
\begin{equation}\begin{split}\label{11j}
e_{1,1\alpha}=0\ \text{for}\ \alpha>\kappa_0.
\end{split}\end{equation}

Indeed, if we choose $c_j=0$ for $1\leq j\leq \kappa_0$, then
$$
\hat{\phi}_{j\al}^{(I_j+I_n)}=e_{j,j\al}-\sqrt{\mu_j}c_{\al}
\text{for}\ 1\leq j\leq \kappa_0<\al\leq n-1.
$$
Thus we can choose $c_{\al}$ for $\kappa_0+1\leq \al\leq n-1$ such
that $\hat{\phi}_{1\al}^{(I_1+I_n)}=0$ with $\kappa_0< \al\leq n-1$.
By \cite{HJX06}, the map has the following form:
\begin{equation}\begin{cases}\label{fp****}
f^{(****)}_{p,j}=z_j+\frac{i}{2}\mu_{p,j}z_jw+O(|(z,w)|^3)\
\text{for}\
j\leq \kappa_0,\\
f^{(****)}_{p,j}=z_j\ \text{for}\
\kappa_0<j\leq n-1,\\
\phi^{(****)}_{p,jk}=\mu_{p,jk}z_jz_k+\sum\limits_{h=1}^{\kappa_0}e_{ph,jk}z_hw+O(|(z,w)|^3)\
\text{for}\
(j,k)\in \mathcal{S}_0,\\
\phi^{(****)}_{p,jk}=\sum\limits_{h=1}^{n-1}e_{ph,jk}z_hw+O(|(z,w)|^3)\
\text{for}\
(j,k)\in \mathcal{S}_1,\\
\ g^{(****)}_p=w.
\end{cases}\end{equation}
Here $e_{p1,1\al}=0$ for $\kappa_0+1\leq \al\leq n-1$. Write
$$\Phi^{****(1,1)}_{p}=\big(\sum\limits_{h=1}^{\kappa_0}e_{ph,jk}\big)_{(j,k)\in
\mathcal{S}}.$$

Next, we recall some relations derived by analyzing the Chern-Moser
equation. The following relations are obtained in \cite{HJY14} for
geometric two case, which in fact holds true independent of the
geometric rank and the codimension of the maps.

As in \cite{HJY14}, write $\xi_j(z) = \overline{{e}_j} \cdot
\Phi^{(2,0)}_0(z)$.
 From \cite[(3.5)]{HJY14}, we know
\begin{equation}\label{hh}
\ov z f^{(2,1)}(z) = - \ov{\text{{$(z_1, ..., z_{\kappa_0})$}}}\cdot
\xi(z).
\end{equation}
By a similar argument as that of \cite[(4.3)]{HJY14}, we obtain
\begin{equation}\begin{split}\label{43}
&\phi^{(1,2)}\in Span\{e_1,\cdots,e_{\kappa_0}\}.
\end{split}\end{equation}
 From
\cite[(4.10)]{HJY14}, we know
\begin{equation}\begin{split}\label{112}
&2\text{Re}\big(\ov{z}f^{(1,2)}(z)\big)+|f^{(1,1)}(z)|^2+|\phi^{(1,1)}(z)|^2=0.
\end{split}\end{equation}

With these preparations, we come to the following main theorem of
this section.

\begin{thm}\label{phi110}
  Let $F:\p \mathbb{H}_n \rightarrow \p \mathbb{H}_N$ be a CR
  immersion as in Theorem \ref{main}.
  For generic $p$ around $0$, the
  associated vector $\Phi^{****(1,1)}_{p}(z)\not\equiv 0$.
\end{thm}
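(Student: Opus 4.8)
The plan is to prove the statement pointwise on the generic set and to reduce everything to the scalar identity (\ref{112}) for the normalized map $F_p^{****}$. By the definition of $\Phi^{****(1,1)}_p$ given right after (\ref{fp****}), the conclusion $\Phi^{****(1,1)}_p(z)\not\equiv 0$ is equivalent to $\phi^{(1,1)}_p\not\equiv 0$, where $\phi^{(1,1)}_p(z)=\sum_{j=1}^{\kappa_0}e_{p,j}z_j$ denotes the bidegree $(1,1)$ part of $\phi_p^{****}$. Since the geometric rank of $F$ equals $\kappa_0$ and is maximal at $0$, it remains equal to $\kappa_0$ for all $p$ in a dense open subset $R$ of a neighborhood of $0$; I will fix such a generic $p\in R$ and argue by contradiction, assuming $\phi^{(1,1)}_p\equiv 0$.

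The first observation is that on $R$ the eigenvalues satisfy $\mu_{p,j}>0$ for $1\le j\le\kappa_0$, so reading off (\ref{fp****}) the $(1,1)$ part of $f_p^{****}$ is $f^{(1,1)}_p(z)=\tfrac{i}{2}\big(\mu_{p,1}z_1,\dots,\mu_{p,\kappa_0}z_{\kappa_0},0,\dots,0\big)$ and hence $|f^{(1,1)}_p(z)|^2=\tfrac14\sum_{j=1}^{\kappa_0}\mu_{p,j}^2|z_j|^2$ is a nonzero positive semidefinite Hermitian form. Next I would propagate the hypothesis $\phi^{(1,1)}_p\equiv 0$, i.e. $e_{p,j}=0$ for all $j$, through the structural relations established above: (\ref{43}) gives $\phi^{(1,2)}_p\in \mathrm{Span}\{e_{p,1},\dots,e_{p,\kappa_0}\}=\{0\}$, so $\phi^{(1,2)}_p=0$, while (\ref{hh}), whose right-hand side is assembled from the $e_{p,j}$, gives $\ov z\,f^{(2,1)}_p(z)=0$ and hence $f^{(2,1)}_p=0$. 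Substituting $\phi^{(1,1)}_p\equiv 0$ into (\ref{112}) then produces the exact identity $2\,\mathrm{Re}\big(\ov z\,f^{(1,2)}_p(z)\big)=-|f^{(1,1)}_p(z)|^2$, whose right-hand side is strictly negative for generic $z$ by the previous display.

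The heart of the argument, and the step I expect to be the main obstacle, is an independent computation of $f^{(1,2)}_p$. For this I would return to the Chern--Moser defining identity $|f_p^{****}|^2+|\phi_p^{****}|^2=|z|^2$ on $w=u+i|z|^2$ and expand it one weight beyond the order that produced (\ref{hh})--(\ref{112}), carrying out the boundary substitution $w^2=u^2+2iu|z|^2-|z|^4$ and inserting the vanishing data $\phi^{(1,1)}_p=\phi^{(1,2)}_p=0$ and $f^{(2,1)}_p=0$. The aim is to isolate a bidegree component of the resulting weight-six equation that is genuinely independent of the one yielding (\ref{112}); this component should pin down the Hermitian (real-diagonal) part of $f^{(1,2)}_p$ in terms of the $\mu_{p,j}$ alone and show that $2\,\mathrm{Re}\big(\ov z\,f^{(1,2)}_p(z)\big)$ is nonnegative, indeed that it vanishes once all the $\phi$-data are killed. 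The delicate points lie exactly here: separating the weight-six contributions into independent bidegrees so that the new relation is not a restatement of (\ref{112}), and tracking real versus imaginary parts of the coefficients of $f^{(1,2)}_p$ (a purely imaginary diagonal coefficient contributes nothing to $2\,\mathrm{Re}(\ov z\,f^{(1,2)}_p)$, so everything hinges on the real diagonal part). Any such relation is incompatible with the strictly negative identity of the previous paragraph unless $\mu_{p,j}=0$ for every $j\le\kappa_0$, i.e. unless $\kappa_0=0$, contradicting $\kappa_0\ge 1$. Having reached a contradiction for each generic $p\in R$, I conclude that $\Phi^{****(1,1)}_p(z)\not\equiv 0$ for generic $p$ near $0$, which is the assertion of Theorem \ref{phi110}.
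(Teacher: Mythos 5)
Your opening moves match the paper's: reduce to the bidegree-$(1,1)$ coefficient vector, assume it vanishes, and feed that through (\ref{hh}), (\ref{43}) and (\ref{112}) to get $f^{(2,1)}=\phi^{(1,2)}=0$ and $2\,\mathrm{Re}\big(\ov z f^{(1,2)}(z)\big)=-|f^{(1,1)}(z)|^2$. But the step you yourself flag as "the heart of the argument" — extracting from a higher-weight expansion of the defining equation a relation forcing $2\,\mathrm{Re}\big(\ov z f^{(1,2)}\big)\ge 0$, hence a sign contradiction — is not only missing, it is aimed at a false target. The identity $2\,\mathrm{Re}\big(\ov z f^{(1,2)}\big)=-|f^{(1,1)}|^2$ is not contradictory: it simply pins down the real parts of the diagonal coefficients, $\mathrm{Re}\,(A_j)=-\mu_j^2/8$ with $A_j=f_j^{(I_j+2I_n)}$, and the paper derives exactly this relation and proceeds with it as consistent data. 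The next weight of the Chern--Moser expansion is precisely (\ref{92eq3}) (that is, [HJY,(4.17)]), and it does not constrain the sign of $\mathrm{Re}\big(\ov z f^{(1,2)}\big)$; instead, after the coefficient identities (\ref{eq11}), (\ref{aj}), (\ref{phi1200}) are fed in, it determines $|\phi^{(3,0)}(z)|^2=\big(\sum_{j\le\kappa_0}\mu_j|z_j|^2\big)^2|z|^2$. No sign contradiction is available at any weight, so your plan cannot close.

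This points to two structural problems. First, you never use hypotheses (1)/(2) of Theorem \ref{main} (the degeneracy rank/dimension conditions), yet they are indispensable: the paper's contradiction is that the formula for $|\phi^{(3,0)}|^2$ forces the third degeneracy dimension to equal exactly $\frac{\kappa_0}{6}\big(3(\kappa_0+3)n-(\kappa_0+1)(2\kappa_0+1)\big)$, which is the value excluded by hypothesis (2) (and incompatible with degeneracy rank $\le 2$ in case (1)). The hypothesis of vanishing $(1,1)$-part describes a configuration that is internally consistent with the Chern--Moser equations, and only the degeneracy assumptions rule it out; a proof ignoring them would prove a statement stronger than the paper's theorem, answering the Cheng--Ji--Liu question without those conditions. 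Second, your argument is pointwise at a single generic $p$, whereas the coefficient identities such as (\ref{eq11}) and (\ref{aj}) are obtained in the paper by assuming $\phi_{p}^{****(1,1)}\equiv 0$ for all $p$ in an open set and comparing coefficients in the base-point variables $(z_0,u_0)$; the single-point hypothesis gives you far less information than the argument needs. So the proposal as written has a genuine gap at its central step, and the intended repair (a positivity relation for $\mathrm{Re}\big(\ov z f^{(1,2)}\big)$) does not exist.
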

\begin{proof}
By Theorem \ref{thm3}, we can suppose that near $0$, the expansion
has the form (\ref{eqn:hao}). Since $\Phi^{****(1,1)}_{p}(z)$ is a smooth function with respect to $p$ and $\Phi^{****(1,1)}_{0}(z) = \Phi^{(1,1)}(z)$ by notation, 
it suffices to prove  $\Phi^{(1,1)}(z)\neq 0$. Assume by
contradiction that $\Phi^{(1,1)}(z)\equiv 0$. By our notation, this
implies $e=\xi=0$. From (\ref{hh})-(\ref{43}), we know
 \begin{equation}\label{zero}
    f^{(2,1)}=\phi^{(1,2)}=0.
 \end{equation}

\medskip

Next we would like to give some asymptotic properties of the
coefficients $F_p^{****}$. For $1\leq j\leq \kappa_0$, we have
\begin{equation}\begin{split}\label{ej}
E_j(p)=&\big(\frac{\p \w{f}_p}{\p
z_j}\big)\big|_0=L_j(\w{f})(p)=\big((\frac{\p}{\p
z_j}+2i\overline{z_{0j}}\frac{\p}{\p w})\w{f}\big)(p)\\
:=&\big(E^{[0]}_j(p),E^{[1]}_j(p),\cdots,E^{[k_0+1]}_j(p)\big).
\end{split}\end{equation}
Here we have set
\begin{equation*}\begin{split}
E^{[0]}_j(p)&=L_j(f)(p)\in \mathbb{C}^{n-1},\\
E^{[k]}_j(p)&=\big(L_j(\phi_{kk})(p),L_j(\phi_{k(k+1)})(p),\cdots,L_j(\phi_{k(n-1)})(p)\big)\in
\mathbb{C}^{n-k}.
\end{split}\end{equation*}
Then
\begin{equation}\begin{split}\label{n1}
E^{[0]}_j(p)&=\big((\frac{\p}{\p
z_j}+2i\overline{z_{0j}}\frac{\p}{\p w}){f}\big)(p)\\
&=\Big(0,\cdots,0,1+\frac{i}{2}\mu_ju_0\ (j\text{-th
position}),0,\cdots,0\Big)+O(2).
\end{split}\end{equation}
and
\begin{equation}\label{n2}
E^{[k]}_j(p)=\left\{\begin{array}{ll}
(0,\cdots,0,\mu_{kj}z_{0k},0,\cdots,0)+O(2),\ &k<j,\\
(2\mu_{kk}z_{0k},\mu_{k(k+1)}z_{0(k+1)},\cdots,\mu_{k(n-1)}z_{0(n-1)})+O(2),\ &k=j,\\
O(2), &k>j.
\end{array}\right.
\end{equation}

By the definition of $C_{ij}$ (see \cite[p.17]{Hu99}), the
asymptotic expansion of $E_j(p)$ given in (\ref{n1}) and (\ref{n2}),
we can suppose that $C_{jk}$ has the following form
$$
C_{jk}=\big(o(1),\cdots,o(1),1+o(1),0,\cdots,0\big),
$$
where $1+o(1)$ is in the position corresponding to that of
$\phi_{jk}$ in $\w{f}$. Since $A$ defined by (\ref{ap}) is unitary,
we can use the implicit function theory to get that
\begin{equation}
C_{jk}=\left\{\begin{array}{ll}
(0,\cdots,0,-\mu_{jk}\ov{z_{ok}},0,\cdots,0,-\mu_{jk}\ov{z_{oj}},0,\cdots,0,1,0,\cdots,0)+O(2)
\ &j<k, j\leq \kappa_0,\\
(0,\cdots,0,-2\mu_{jj}\ov{z_{oj}},0,\cdots,0,1,0,\cdots,0)+O(2)
\ &j=k,\\
(0,\cdots,0,1,0,\cdots,0)+O(2)
\ &j= \kappa_0+1,\\
\end{array}\right.
\end{equation}

From (\ref{ljtf}) and $f^{(2,1)}(z)\equiv0$, we know for $1\leq
j\leq \kappa_0$ the following
\begin{equation}\begin{split}
f_{p,j}^{(1,1)}=&\sum\limits_{k=1}^{n-1}L_kTf_j(p)z_k=
\frac{i}{2}\mu_jz_j+\sum\limits_{k=1}^{\kappa_0}2f_j^{(I_k+2I_n)}u_0z_k+O(|(z_0,u_0)|^2)z.
\end{split}\end{equation}
For $\kappa_0+1\leq j\leq n-1$, we have
\begin{equation}\begin{split}
f_{p,j}^{(1,1)}=&\sum\limits_{k=1}^{n-1}L_kTf_j(p)z_k =0.
\end{split}\end{equation}
For $1\leq j\leq n-1$, we have
\begin{equation}\begin{split}
f_{p,j}^{(0,2)}=&\frac{1}{2}T^2f_{p,j}(0)=\frac{1}{2}T^2f_j(p)=
\sum\limits_{k=1}^{\kappa_0}f_j^{(I_k+2I_n)}z_{0k}+O(|(z_0,u_0)|^2).
\end{split}\end{equation}

From (\ref{ljtf}), we know
\begin{equation}\begin{split}
&L_jT\phi_p(0)=\frac{\p^2}{\p z_j \p
w}\phi(p)+2i\ov{z_{0j}}\frac{\p^2}{\p w^2}\phi(p),\
T^2\phi_p(0)=\frac{1}{2}\frac{\p^2}{ \p w^2}\phi(p).
\end{split}\end{equation}
Thus we get
\begin{equation}\begin{split}
\phi_{p,jl}^{(1,1)}=&\sum\limits_{k=1}^{\kappa_0}2\phi_{jl}^{(2I_k+I_n)}z_{0k}z_k+\sum\limits_{k\neq
h,k,h=1}^{n-1}\phi_{jl}^{(I_h+I_k+I_n)}z_{0h}z_k+O(|(z_0,u_0)|^2)z
\end{split}\end{equation}
and
\begin{equation}\begin{split}
\phi_{p,jl}^{(0,2)}=&\phi_{jl}^{(1,2)}(z_0)+O(|(z_0,u_0)|^2)=O(|(z_0,u_0)|^2).
\end{split}\end{equation}
 Here the last equality follows from (\ref{zero}). From (\ref{ljtg}), we know
\begin{equation}\begin{split}
g_p^{(1,1)}&=\sum\big(L_jTg(p)-2i\ov{\w{f}(p)}\cdot
L_jT\w{f}(p)\big)z_j=O(|(z_0,u_0)|)z,\\
g_p^{(0,2)}&=O(|(z_0,u_0)|).
\end{split}\end{equation}
Notice that $\lambda(p)=1+O(2)$, thus for $1\leq j\leq \kappa_0$
\begin{equation}\begin{split}
\phi_{p,jj}^{*(1,1)}=&\big(\frac{1}{\sqrt{\lambda(p)}}\w{f}_p\cdot
\ov{C^t_{jj}}\big)^{(1,1)}\\
=&f_{p,j}^{(1,1)}\cdot
(-2\mu_{jj}{z_{0j}})+\phi_{p,jj}^{(1,1)}+O(|(z_0,u_0)|^2)z\\
=&-i\mu_j\mu_{jj}{z_{0j}}z_j+\sum\limits_{k=1}^{\kappa_0}2\phi_{jj}^{(2I_k+I_n)}z_{0k}z_k+\sum\limits_{k\neq
h,k,h=1}^{n-1}\phi_{jj}^{(I_h+I_k+I_n)}z_{0h}z_k\\
&+O(|(z_0,u_0)|^2)z.
\end{split}\end{equation}
Similarly, we have
\begin{equation}\begin{split}
f_{p,j}^{*(0,2)}=&\big(\frac{1}{\lambda(p)}\w{f}_p\cdot
\ov{E^t_{j}}\big)^{(0,2)}
=\sum\limits_{k=1}^{\kappa_0}f_j^{(I_k+2I_n)}z_{0k}+O(|(z_0,u_0)|^2),\\
\phi_{p,jj}^{*(0,2)}=&O(|(z_0,u_0)|^2).
\end{split}\end{equation}

Notice that
\begin{equation*}\begin{split}
a,r=o(1),\ U=I+o(1),\  U^*=I+o(1),\ \phi_{p,jk}^{*(1,1)}=o(1),\
f_{p,j}^{*(0,2)}=o(1), \ \phi_{p,jk}^{*(0,2)}=o(1).
\end{split}\end{equation*}
A straightforward computation shows that
\begin{equation}\begin{split}\label{phi****}
\phi_{p,jj}^{***(1,1)}=&-i\mu_j\mu_{jj}{z_{0j}}z_j+\sum\limits_{k=1}^{\kappa_0}2\phi_{jj}^{(2I_k+I_n)}
z_{0k}z_k+\sum\limits_{k\neq
h,k,h=1}^{n-1}\phi_{jj}^{(I_h+I_k+I_n)}z_{0h}z_k\\
&+O(|(z_0,u_0)|^2)z,\\
f_{p,j}^{***(0,2)}=&\sum\limits_{k=1}^{\kappa_0}f_j^{(I_k+2I_n)}z_{0k}+O(|(z_0,u_0)|^2),\\
\phi_{p,jj}^{***(0,2)}=&O(|(z_0,u_0)|^2).
\end{split}\end{equation}
By Lemma \ref{hatexp}, to make $f_p^{****(0,2)}=0$, we have to
choose $c_j$ for $1\leq j\leq \kappa_0$ such that
\begin{equation}\begin{split}\label{cj}
c_j=-\frac{2i}{\mu_j}f_{p,j}^{***(0,2)}
=-\frac{2i}{\mu_j}\sum\limits_{k=1}^{\kappa_0}f_j^{(I_k+2I_n)}z_{0k}+O(|(z_0,u_0)|^2).
\end{split}\end{equation}
From  Lemma \ref{hatexp} and (\ref{phi****}), we know
\begin{equation}\begin{split}
\phi_{p,jj}^{****(1,1)}(z)=&-i\mu_j\mu_{jj}{z_{0j}}z_j+\sum\limits_{k=1}^{\kappa_0}2\phi_{jj}^{(2I_k+I_n)}
z_{0k}z_k+\sum\limits_{k\neq
h,k,h=1}^{n-1}\phi_{jj}^{(I_h+I_k+I_n)}z_{0h}z_k\\
&-2\sqrt{\mu_j}c_jz_{j}+O(|(z_0,u_0)|^2)z.
\end{split}\end{equation}

Thus the coefficients of $z_k$ term in $\phi_{p,jj}^{****(1,1)}(z)$
is the following:
\begin{equation}\begin{split}
I_j(z_0,u_0):=&-i\mu_j\mu_{jj}{z_{0j}}+2\phi_{jj}^{(2I_j+I_n)}
z_{0j}+\sum\limits_{ k\neq
j,k=1}^{n-1}\phi_{jj}^{(I_k+I_j+I_n)}z_{0k}+\frac{4i}{\sqrt{\mu_j}}\sum\limits_{k=1}^{\kappa_0}
f_j^{(I_k+2I_n)}z_{0k}\\
&+O(|(z_0,u_0)|^2),\\
I_k(z_0,u_0):=&2\phi_{jj}^{(2I_k+I_n)} z_{0k}+\sum\limits_{h\neq
k}\phi_{jj}^{(I_h+I_k+I_n)}z_{0h}+O(|(z_0,u_0)|^2)\ \text{for}\
k\neq j.
\end{split}\end{equation}
If $\phi_{p,jj}^{****(1,1)}(z)\equiv 0$ in a neighborhood of
$0$, then $I_h(z_0,u_0)\equiv 0$ in a a neighborhood of $0$ for
$1\leq h\leq n-1$. From $I_k(z_0,u_0)\equiv 0$ for $k\neq j$, we
know $\phi_{jj}^{(I_k+I_h+I_n)}=0$ for $k\neq j$ and $1\leq j\leq
n-1$. From $I_j(z_0,u_0)\equiv 0$, we get
\begin{equation}\begin{split}\label{eq11}
&f_j^{(I_k+2I_n)}=0\ \text{for}\ k\neq j,\\
&-i\mu_j\mu_{jj}+2\phi_{jj}^{(2I_j+I_n)}+\frac{4i}{\sqrt{\mu_j}}f_j^{(I_j+2I_n)}=0.
\end{split}\end{equation}
Thus
\begin{equation}\begin{split}\label{cc}
&\phi_{p,jj}^{***(1,1)}=\Big(-i\mu_j\mu_{jj}+2\phi_{jj}^{(2I_j+I_n)}\Big)z_{0j}z_j,\\
&c_j=-\frac{2i}{\mu_j}f_j^{(I_j+2I_n)}z_{0j}+O(|(z_0,u_0)|^2).
\end{split}\end{equation}

First, by (4.34), we know
\begin{equation}\begin{split}\label{eq1111}
&-i\mu_j\mu_{jj}+2\phi_{jj}^{(2I_j+I_n)}+\frac{4i}{\sqrt{\mu_j}}f_j^{(I_j+2I_n)}=0.
\end{split}\end{equation}

For the pair $(j,l)$ satisfying $1\leq j\leq l\leq \kappa_0$, by the
asymptotic expansion of $C_{jl}$, we get
\begin{equation}\begin{split}
\phi_{p,jl}^{*(1,1)}=&\big(\frac{1}{\sqrt{\lambda(p)}}\w{f}_p\cdot
\ov{C^t_{jl}}\big)^{(1,1)}\\
=&f_{p,j}^{(1,1)}\cdot (-\mu_{jl}{z_{0l}})+f_{p,l}^{(1,1)}\cdot
(-\mu_{jl}{z_{0j}})+\phi_{p,jl}^{(1,1)}+O(|(z_0,u_0)|^2)z\\
=&-\frac{i}{2}\mu_j\mu_{jl}{z_{0l}}z_j-\frac{i}{2}\mu_l\mu_{jl}{z_{0j}}z_l
+\sum\limits_{k=1}^{\kappa_0}2\phi_{jl}^{(2I_k+I_n)}z_{0k}z_k\\
&+\sum\limits_{k\neq
h,k,h=1}^{n-1}\phi_{jl}^{(I_h+I_k+I_n)}z_{0h}z_k+O(|(z_0,u_0)|^2)z.
\end{split}\end{equation}
We further know
\begin{equation}\begin{split}
\phi_{p,jl}^{****(1,1)}(z)=&-\frac{i}{2}\mu_j\mu_{jl}{z_{0l}}z_j-\frac{i}{2}\mu_l\mu_{jl}{z_{0j}}z_l
+\sum\limits_{k=1}^{\kappa_0}2\phi_{jl}^{(2I_k+I_n)}z_{0k}z_k\\
&+\sum\limits_{k\neq
h,k,h=1}^{n-1}\phi_{jl}^{(I_h+I_k+I_n)}z_{0h}z_k-\mu_{jl}c_lz_{j}-\mu_{jl}c_jz_{l}+O(|(z_0,u_0)|^2)z.
\end{split}\end{equation}
By comparing the coefficients of $z_k$ term in
$\phi_{p,jl}^{****(1,1)}(z)$, we obtain
\begin{equation}\begin{split}
&\phi_{jl}^{(I_h+I_k+I_n)}=0\ \text{for}\ h<l,\ (h,k)\neq
(j,j),(j,l),(l,j),(l,l).\\
&-\frac{i}{2}\mu_j\mu_{jl}{z_{0l}}+2\phi_{jl}^{(2I_j+I_n)}z_{0j}+\sum\limits_{h\neq
j,h=1}^{n-1}\phi_{jl}^{(I_j+I_h+I_n)}z_{0h}-\mu_{jl}c_l=0,\\
&-\frac{i}{2}\mu_l\mu_{jl}{z_{0j}}+2\phi_{jl}^{(2I_l+I_n)}z_{0l}+\sum\limits_{h\neq
l,h=1}^{n-1}\phi_{jl}^{(I_l+I_h+I_n)}z_{0h}-\mu_{jl}c_j=0.
\end{split}\end{equation}
By the formula for $c_j$ in (\ref{cc}), we know $\phi_{jl}^{(I_k+I_h+I_n)}=0$ for
$(k,h)\neq (j,l),(l,j)$ and
\begin{equation}\begin{split}\label{phi12}
&-\frac{i}{2}\mu_j\mu_{jl}+\phi_{jl}^{(I_j+I_l+I_n)}+\frac{2i\mu_{jl}}{\mu_l}f_l^{(I_l+2I_n)}=0,\\
&-\frac{i}{2}\mu_l\mu_{jl}+\phi_{jl}^{(I_j+I_l+I_n)}+\frac{2i\mu_{jl}}{\mu_j}f_j^{(I_j+2I_n)}=0.
\end{split}\end{equation}
Eliminating $\phi_{jl}^{(I_j+I_l+I_n)}$ in the above system, we get
\begin{equation}\begin{split}\label{cc1}
&\frac{i}{2}\mu_j+\frac{2i}{\mu_j}f_j^{(I_j+2I_n)}=\frac{i}{2}\mu_l+\frac{2i}{\mu_l}f_l^{(I_l+2I_n)}.
\end{split}\end{equation}
 Write
\begin{equation}\begin{split}
A_i=f_i^{(I_i+2I_n)},\ B_i=\phi_{ii}^{(2I_i+I_n)}.
\end{split}\end{equation}
Then by (\ref{eq11}) and (\ref{cc1}),
\begin{equation}\begin{split}\label{aj}
&-i\mu_j\mu_{jj}+2B_j+\frac{4i}{\sqrt{\mu_j}}A_j=0.\\
&\frac{i}{2}\mu_j+\frac{2i}{\mu_j}A_j=\frac{i}{2}\mu_l+\frac{2i}{\mu_l}A_l.
\end{split}\end{equation}
From (\ref{phi12}), we get
\begin{equation}\begin{split}\label{phi1200}
&\phi_{jl}^{(I_j+I_l+I_n)}=\frac{i}{2}\mu_j\mu_{jl}-\frac{2i\mu_{jl}}{\mu_l}A_l.
\end{split}\end{equation}

By the asymptotic expansion of  $C_{j\al}$ for $1\leq j\leq
\kappa_0<\alpha\leq n-1$, we get
\begin{equation}\begin{split}
\phi_{p,j\al}^{*(1,1)}=&\big(\frac{1}{\sqrt{\lambda(p)}}\w{f}_p\cdot
\ov{C^t_{j\al}}\big)^{(1,1)}\\
=&f_{p,j}^{(1,1)}\cdot (-\mu_{j\al}{z_{0\al}})+\phi_{p,j\al}^{(1,1)}+O(|(z_0,u_0)|^2)z\\
=&-\frac{i}{2}\mu_j\mu_{j\al}{z_{0\al}}z_j
+\sum\limits_{k=1}^{\kappa_0}2\phi_{j\al}^{(2I_k+I_n)}z_{0k}z_k\\
&+\sum\limits_{k\neq
h,k,h=1}^{n-1}\phi_{j\al}^{(I_h+I_k+I_n)}z_{0h}z_k+O(|(z_0,u_0)|^2)z.
\end{split}\end{equation}
We further know
\begin{equation}\begin{split}
\phi_{p,j\al}^{****(1,1)}(z)=&-\frac{i}{2}\mu_j\mu_{j\al}{z_{0\al}}z_j
+\sum\limits_{k=1}^{\kappa_0}2\phi_{j\al}^{(2I_k+I_n)}z_{0k}z_k\\
&+\sum\limits_{k\neq
h,k,h=1}^{n-1}\phi_{j\al}^{(I_h+I_k+I_n)}z_{0h}z_k-\mu_{j\al}c_{\al}
z_{j}-\mu_{j\al}c_jz_{\al}+O(|(z_0,u_0)|^2)z.
\end{split}\end{equation}
By comparing the coefficients of $z_k$ term in
$\phi_{p,j\al}^{****(1,1)}(z)$, we obtain
\begin{equation}\begin{split}
&\phi_{j\al}^{(I_h+I_k+I_n)}=0\ \text{for}\ (h,k)\neq
(j,j),(j,\al),(\al,j).\\
&-\frac{i}{2}\mu_j\mu_{j\al}{z_{0\al}}+2\phi_{j\al}^{(2I_j+I_n)}z_{0j}+\sum\limits_{h\neq
j,h=1}^{n-1}\phi_{j\al}^{(I_j+I_h+I_n)}z_{0h}-\mu_{j\al}c_\al=0,\\
&\sum\limits_{h\neq
\alpha,h=1}^{n-1}\phi_{j\al}^{(I_\alpha+I_h+I_n)}z_{0h}-\mu_{j\al}c_j=0.
\end{split}\end{equation}
By the formula for $c_j$, we know $\phi_{j\al}^{(I_h+I_k+I_n)}=0$
for $(h,k)\neq (j,\al),(\al,j)$ and
\begin{equation}\begin{split}
&-\frac{i}{2}\mu_j\mu_{j\al}z_{0\al}+\phi_{j\al}^{(I_j+I_\al+I_n)}z_{0\al}=\mu_{j\al}c_{\al},\\
&\phi_{j\al}^{(I_j+I_\al+I_n)}-\mu_{j\al}c_j=0.
\end{split}\end{equation}
Hence
\begin{equation}\begin{split}
\phi_{j\al}^{(I_j+I_\al+I_n)}=\mu_{j\al}(-\frac{2i}{\mu_1})f_j^{(I_j+2I_n)}=-\frac{2i}{\sqrt{\mu_j}}A_j.
\end{split}\end{equation}

By considering $\phi_{p,\al\be}^{****(1,1)}(z)$ for $(\al,\be)\in
\mathcal{S}_1$, we know $\phi_{\al\be}^{(2,1)}=0$.

\bigskip

By [HJY,(4.10)], we have
\begin{equation*}\begin{split}
&2\text{Re}\big(\ov{z}f^{(1,2)}(z)\big)+|f^{(1,1)}(z)|^2+|\phi^{(1,1)}(z)|^2=0.
\end{split}\end{equation*}
From this equation, we know Re$(A_i)=-\frac{\mu_j^2}{8}$.
Substituting this to the latter equation of (\ref{aj}) and collecting the imaginary part of its both sizes, we obtain $\mu_j=\mu_k$. Hence by (\ref{aj}) again,
$A_j=A_k$ and $B_j=B_k$.

On the other hand, by [HJY,(4.17)], we know
\begin{equation}\begin{split}\label{92eq3}
&2\Big(-2\ov{z}f^{(1,2)}(z)|z|^2+i\ov{\Phi_0^{(2,0)}(z)}\cdot
{\Phi_0^{(2,1)}(z)}\Big)|z|^2+|\phi^{(3,0)}(z)|^2=0.
\end{split}\end{equation}
Substituting  into this equation, we get
\begin{equation}\begin{split}
&-4\sum\limits_{j=1}^{\kappa_0}
A_j|z_j|^2|z|^4+2i\Big\{\sum\limits_{1\leq j\leq
\kappa_0}\ov{\mu_{jj}z_j^2}\cdot B_jz_j^2+\sum\limits_{1\leq j<k\leq
\kappa_0}\ov{\mu_{jk}z_jz_k}\cdot
(\frac{i}{2}\mu_j\mu_{jk}-\frac{2i\mu_{jk}}{\mu_k}A_k)z_jz_k\\
&+\sum\limits_{1\leq j\leq \kappa_0<\alpha\leq
n-1}\ov{\mu_{j\alpha}z_jz_\alpha}\cdot (-\frac{2i}
{\sqrt{\mu_{j}}}A_j)z_jz_\alpha\Big\}|z|^2+|\Phi_1^{(3,0)}(z)|^2=0.
\end{split}\end{equation}
After a direct simplification, we obtain:
\begin{equation}\begin{split}
|\Phi_1^{(3,0)}(z)|^2=(\sum\limits_{1\leq j\leq
\kappa_0}\mu_{j}|z_j|^2)^2|z|^2.
\end{split}\end{equation}
This means that $ (\Phi_1^{(3,0)}(z))$ is a vector of dimension
$\frac{1}{2}\kappa_0(\kappa_0+1)(n-\kappa_0)+\frac{1}{6}\kappa_0(\kappa_0+1)(\kappa_0+2)$.
Hence the third degeneracy dimension is of
\begin{equation}\begin{split}
&n+(n-1)+\cdots+(n-\kappa_0)+\frac{1}{2}\kappa_0(\kappa_0+1)(n-\kappa_0)+\frac{1}{6}\kappa_0(\kappa_0+1)(\kappa_0+2)\\
=&\frac{\kappa_0}{6}\Big(3(\kappa_0+3)n-(\kappa_0+1)(2\kappa_0+1)\Big).
\end{split}\end{equation}

\end{proof}




\section{Proof of the main theorem}

With (\ref{pqp****}) and Theorem \ref{phi110} at our disposal, we
are now in a position to prove our main theorem.

\begin{proof}[Proof of our main theorem]
We prove the theorem by
absurdity. Suppose that the geometric rank of the map is
$\kappa_0\in[1,n-2]$ and
the map satisfies the degeneracy rank or the degeneracy dimension conditions.
We will prove that the CR Gauss map of the map $F$ must be
non-degenerate. Write
$\mathcal{M}=\{p\in \p\mathbb{B}^n: Rk_F(p)=
\kappa_0\}.$ It will suffice to show the non-degeneracy of the Gauss map for $p_0\in \mathcal M$. This is because, if so, $dim_{\RR}\gamma(\mathcal M)= 2n-1$ while $dim_{\RR}\gamma(\partial\BB^n\setminus \mathcal M)\le dim_{\RR}\partial\BB^n\setminus \mathcal M\le 2n-2$, which would be non-generic. Due to Theorem \ref{thm3} and Theorem \ref{phi110}, we
also suppose at $p_0\in\mathcal M$, $F$ has the form (\ref{fp****}) and has the additional condition
$\phi_{p_0}^{****(1,1)}\not\equiv0$. Without loss of generality, we
suppose that $p_0=0$.

For every $p$ close to $0$, write
\begin{equation}\begin{split}
\frac{\p \phi^{***(2)}_{p,kl}}{\p z_j}
=&\sum\limits_{h=1}^{n-1}\Gamma_{j,kl}^{[h]}(p)z_h+\Gamma_{j,kl}^{[n]}(p)w
+ O(2) ,\
T\phi^{***(2)}_p=\sum\limits_{h=1}^{n-1}\Gamma_{n,kl}^{[h]}(p)z_h+\Gamma_{j,kl}^{[n]}(p)w
+ O(2).
\end{split}\end{equation}
Denote by $\Upsilon(p)$ the following $n (N-n)\times n$ matrix
\begin{equation}\begin{split}
\Upsilon(p)=\big(\Gamma_{j,kl}^{[1]}(p)\ \Gamma_{j,kl}^{[2]}(p)\
\cdots\ \Gamma_{j,kl}^{[n]}(p)\big)_{1\leq j\leq n,(k,l)\in
\mathcal{S}}.
\end{split}\end{equation}
By our normalization properties, we know, for $1\leq j\leq \kappa_0$
and $\kappa_0+1\leq \alpha\leq n-1$, the following
\begin{equation}\begin{split}
\frac{\p}{\p z_{1}}\phi^{(2)}_{1\alpha}=\mu_{1\alpha}z_{\alpha},\
\frac{\p}{\p z_{\alpha}}\phi^{(2)}_{j\alpha}=\mu_{j\alpha}z_{j}.
\end{split}\end{equation}
If $e_{j,kl}\neq 0$, then $\frac{\p}{\p
z_{j}}\phi^{(2)}_{kl}=\sum\limits_{h=1}^{n-1}\lambda_hz_h+\tau w$
for some $\tau\neq 0$. Hence we must have $\Upsilon(0)$ is of rank
$n$. Notice that $ \Upsilon(p)=\Upsilon(0)+o(1) $, hence when $p$ is
sufficiently close to $0$, $\Upsilon(p)$ is also of rank $n$.

On the other hand, we know
$$
\frac{\p \phi^{****}_p}{\p z_j}(q^{****}-1),\ \phi^{****}_p\frac{\p
q^{****}}{\p z_j},\ T\phi^{****}_p(q^{****}-1),\
\phi^{****}_pTq^{****}=O(|(z,w)|^2).
$$
Hence by the implicit function theory, there is a small neighborhood
$U$ of $0$, such that for every point $p\in U$, (\ref{pq}) has only
one solution in some neighborhood $V$. Choose $U_1\subset U$
sufficiently small such that the solutions are contained in $V_1$
which  is also very small and
$(z+\hat{z},w+\hat{w}+2i\ov{\hat{z}}z)\in U\cap V$ for
$(\hat{z},\hat{w})\in U_1$ and $(z,w)\in V_1$. Then for generic
point $(z_0,w_0)\in U_1$, (\ref{pq}) has only one solution  $(z,w)$
with $(\w{z},\w{w})\in U$, which means that the CR Gauss map is
non-degenerate.
\end{proof}

{\bf Acknowledgement} We would like to thank  Xiaojun Huang and  Shanyu Ji for helpful discussions.

\medskip \medskip

\bibliographystyle{amsalpha}

\noindent Wanke Yin, wankeyin@whu.edu.cn, School of Mathematics and Statistics, Wuhan University,  Wuhan, Hubei 430072,
China.\\

\noindent Yuan Yuan, yyuan05@syr.edu, Department of Mathematics, Syracuse University, Syracuse, NY 13244 USA.\\

\noindent Yuan Zhang, zhangyu@pfw.edu, Department of Mathematics, Purdue University, Fort Wayne, IN 46805, USA.

\end{document}